\newenvironment{axiom}[1]
{
\par\smallskip\noindent
{\bf Axiom #1}\begin{it}
}
{
\end{it}\par\smallskip
 }
\newtheorem{theorem}[subsubsection]{Theorem}
\newtheorem{lemma}[subsubsection]{Lemma}
\newtheorem{corollary}[subsubsection]{Corollary}
\newtheorem{remark}[subsubsection]{Remark}
\newtheorem{definition}[subsubsection]{Definition}
\newcommand\testshape{family=\f@family; series=\f@series; shape=\f@shape.}
\def\myemphInternal#1{\if n\f@shape%
\begingroup\itshape #1\endgroup\/%
\else\begingroup\bfseries #1\endgroup%
\fi}
\def\myemph{\futurelet\testchar\MaybeOptArgmyemph}
\def\MaybeOptArgmyemph{\ifx[\testchar \let\next\OptArgmyemph
                 \else \let\next\NoOptArgmyemph \fi \next}
\def\OptArgmyemph[#1]#2{\index{#1}\myemphInternal{#2}}
\def\NoOptArgmyemph#1{\myemphInternal{#1}}
\newcommand\CCC{{\mathbb C}}
\newcommand\NNN{{\mathbb N}}
\newcommand\RRR{{\mathbb R}}
\newcommand\ZZZ{{\mathbb Z}}
\newcommand\FF{{\mathcal F}}
\newcommand\id{\mathrm{id}}          
\newcommand\Per{\mathrm{Per}}        
\newcommand\AxCrPt{{\text{\rm(L)}}}
\newcommand\AxBd{{\text{\rm(B)}}}
\newcommand\Mman{M}
\newcommand\Pman{P}
\newcommand\Qman{Q}
\newcommand\Uman{U}
\newcommand\Orb{\mathcal{O}}        
\newcommand\Stab{\mathrm{Stab}}       
\newcommand\Diff{\mathrm{Diff}}       
\newcommand\Vol{\mathrm{Vol}}       
\newcommand\Symp{\mathrm{Symp}}     
\newcommand\DiffId{\Diff_{0}}     
\newcommand\StabId{\Stab_{0}}     
\newcommand\Cinfty{\mathcal{C}^{\infty}}
\newcommand\func{f}
\newcommand\gfunc{g}
\newcommand\dif{h}
\newcommand\DiffM{\Diff(\Mman)}
\newcommand\DiffIdM{\DiffId(\Mman)}
\newcommand\Morse{\mathrm{Morse}}
\newcommand\Stabilizer[1]{\Stab(#1)}             
\newcommand\StabilizerPlus[1]{\Stab^{+}(#1)}     
\newcommand\StabilizerId[1]{\StabId(#1)}         
\newcommand\Orbit[1]{\Orb(#1)}                   
\newcommand\SingularSet[1]{\Sigma_{#1}}             
\newcommand\AutKRGraphStab[1]{\mathbf{G}}             
\newcommand\fSing{\SingularSet{\func}}                
\newcommand\fld{F}
\newcommand\flow{\mathbf{F}}
\newcommand\image{\mathrm{image}}
\newcommand\Rhalf[1]{\RRR_{+}^{#1}}
\newcommand\hFld{H}
\newcommand\hFlow{\mathbf{\hFld}}
\newcommand\sh{\varphi}
\newcommand\ZentrQ[2]{\mathcal{Z}_{\omega}^{#1}(#2)}
\newcommand\KRGraph{K}
\newcommand\df[2]{#1(#2)}
\newcommand\VolM{\Vol(\Mman,1)}
\begin{document}

\author{Sergiy Maksymenko}
\title{Symplectomorphisms of surfaces preserving a smooth function, I}
\address{Institute of Mathematics of NAS of Ukraine, Te\-re\-shchen\-kivska st. 3, Kyiv, 01004 Ukraine} 
\email{maks@imath.kiev.ua}
\urladdr{http://www.imath.kiev.ua/~maks}
\thanks{The author is indebted to Bogdan Feshchenko for useful discussions.}

\keywords{Morse function, symplectomorphism, surface}
\subjclass[2000]{37J05, 
57S05, 
58B05, 
}


\begin{abstract}
Let $M$ be a compact orientable surface equipped with a volume form $\omega$, $P$ be either $\mathbb{R}$ or $S^1$, $f:M\to P$ be a $C^{\infty}$ Morse map, and $H$ be the Hamiltonian vector field of $f$ with respect to $\omega$.
Let also $\mathcal{Z}_{\omega}(f) \subset C^{\infty}(M,\mathbb{R})$ be set of all functions taking constant values along orbits of $H$, and $\mathcal{S}_{\mathrm{id}}(f,\omega)$ be the identity path component of the group of diffeomorphisms of $M$ mutually preserving $\omega$ and $f$.

We construct a canonical map $\varphi: \mathcal{Z}_{\omega}(f) \to \mathcal{S}_{\mathrm{id}}(f,\omega)$ being a homeomorphism whenever $f$ has at least one saddle point, and an infinite cyclic covering otherwise.
In particular, we obtain that $\mathcal{S}_{\mathrm{id}}(f,\omega)$ is either contractible or homotopy equivalent to the circle.

Similar results hold in fact for a larger class of maps $\Mman\to\Pman$ whose singularities are equivalent to homogeneous polynomials without multiple factors.
\end{abstract}

\maketitle

\section{Introduction}
Let $\Mman$ be a closed oriented surface, $\DiffM$ be the group of all $C^{\infty}$ diffeomorphisms of $\Mman$, and $\DiffIdM$ be the identity path component of $\DiffM$ consisting of all diffeomorphisms isotopic to the identity.

Let also $\VolM$ be the space of all volume forms on $\Mman$ having volume $1$ and $\omega\in\VolM$.
Since $\dim\Mman=2$, $\omega$ is a closed non-degenerate $2$-form and so it defines a symplectic structure on $\Mman$.
Denote by $\Symp(\Mman,\omega)$ the group of all $\omega$-preserving $C^{\infty}$ diffeomorphisms, and let $\Symp_0(\Mman,\omega)$ be its identity path component.

Then Moser's stability theorem~\cite{Moser:TrAMS:1965} implies that for any $C^{\infty}$ family 
\[ \{\omega_t\}_{t\in D^n} \subset \VolM\]
of volume forms parameterized by points of a closed $n$-dimensional disk $D^n$, there exists a $C^{\infty}$ family of diffeomorphisms 
\[\{\dif_t\}_{t\in D^n} \subset \DiffIdM\]
such that $\omega_t = \dif_t^{*} \omega$ for all $t\in D^n$.
In particular, this implies that the map 
\begin{align*}
&p:\DiffIdM \to \VolM, & p(\dif) &= \dif^{*}\omega
\end{align*}
is a Serre fibration with fiber $\Symp_0(\Mman,\omega)$, see e.g.\! \cite[\S3.2]{McDuffSalamon:SympTop:1995}, \cite{Banyaga:CMH:1978}, or~\cite[\S7.2]{Polterovich:SympDiff:2001}.

Since $\VolM$ is convex and therefore contractible, it follows from exact sequence of homotopy groups of the Serre fibration $p$ that $p$ yields isomorphisms of the corresponding homotopy groups $\pi_k\Symp_0(\Mman,\omega) \cong \pi_k\DiffIdM$, $k\geq0$.
Hence the inclusion
\begin{equation}\label{equ:Symp0_diffIdM}
\Symp_0(\Mman,\omega) \ \subset \ \DiffIdM
\end{equation}
turns out to be a weak homotopy equivalence.
See also \cite{McDuff:ProcAMS:1985} for discussions of the inclusion~\eqref{equ:Symp0_diffIdM} for non-compact manifolds.

Moreover, let $\Diff^{+}(\Mman)$ be the group of orientation preserving diffeomorphisms.
Then we have an inclusion $i:\Symp(\Mman,\omega) \subset \Diff^{+}(\Mman)$.
Indeed, if $\dif$ preserves $\omega$, then it fixes the corresponding cohomology class $[\omega]\in H^2(\Mman,\RRR) \cong \RRR$, and so yields the identity on $H^2(\Mman,\RRR)$.
In particular, $\dif$ preserves orientation of $\Mman$.
Hence~\eqref{equ:Symp0_diffIdM} also implies that $i$ yields a monomorphism $i_0:\pi_0\Symp(\Mman,\omega) \to \pi_0\Diff^{+}(\Mman)$ on the set of isotopy classes.

It is well known that $\pi_0\Diff^{+}(\Mman)$ is generated by isotopy classes of Dehn twists,~\cite{Dehn:AM:1938}, \cite{Lickorish:PCPS:1964}, and one easily shows that each Dehn twist can be realized by $\omega$-preserving diffeomorphism.
This implies that $i_0$ is also surjective, and so $i$ is a weak homotopy equivalence as well.

On the other hand, let $\func:\Mman\to\RRR$ be a Morse function,
\[
\Stabilizer{\func} = \{\dif\in\DiffM \mid \func\circ\dif = \func \}
\]
be the group of $\func$-preserving diffeomorphisms, i.e. the \myemph{stabilizer} of $\func$ with respect to the right action of $\DiffM$ on $C^{\infty}(\Mman,\RRR)$, and $\StabilizerId{\func}$ be its identity path component.
Let also 
\[
\Orbit{\func} = \{ \func\circ\dif \mid \dif\in\DiffM \}
\]
be the corresponding \myemph{orbit} of $\func$,
\[
\Stabilizer{\func,\omega} = \Stabilizer{\func} \cap \Symp(\Mman,\omega)
\]
be the group of diffeomorphisms mutually preserving $\func$ and $\omega$, and $\StabilizerId{\func,\omega}$ be its identity path component.

In a series of papers the author proved that $\StabilizerId{\func}$ is either \myemph{contractible} or \myemph{homotopy equivalent to the circle} and computed the higher homotopy groups of $\Orbit{\func}$, \cite{Maksymenko:AGAG:2006}, \cite{Maksymenko:ProcIM:ENG:2010}; showed that $\Orbit{\func}$ is homotopy equivalent to a finite-dimensional CW-complex,~\cite{Maksymenko:TrMath:2008}; and recently described precise algebraic structure of the fundamental group $\pi_1\Orbit{\func}$,~\cite{Maksymenko:DefFuncI:2014}.
E.~Kudryavtseva, \cite{Kudryavtseva:MathNotes:2012}, \cite{Kudryavtseva:MatSb:2013}, studied the homotopy type of the space of Morse maps on compact surfaces and using similar ideas as in~\cite{Maksymenko:AGAG:2006}, \cite{Maksymenko:ProcIM:ENG:2010} proved that $\Orbit{\func}$ has the homotopy type of a quotient of a torus by a free action of a certain finite group.

The present paper is former in a series subsequent ones devoted to extension of the above results to the case of $\omega$-preserving diffeomorphisms.
We will describe here the homotopy type of $\StabilizerId{\func,\omega}$.
In next papers will study the homotopy type of the subgroup of $\Stabilizer{\func,\omega}$ trivially acting on the Kronrod-Reeb graph of $\func$, see~\S\ref{sect:Kronrod_Reeb_graph}, and describe the precise algebraic structure of $\pi_0\Stabilizer{\func,\omega}$.

Notice that if $\hFld$ is the Hamiltonian vector field of $\func$ and $\hFlow:\Mman\times \RRR\to\Mman$ is the corresponding Hamiltonian flow, then $\hFlow_t\in\Stabilizer{\func,\omega}$ for all $t\in\RRR$.

More generally, given a $C^{\infty}$ function $\alpha:\Mman\to\RRR$, one can define the map 
\begin{align*}
&\hFlow_{\alpha}: \Mman\to\Mman, & \hFlow_{\alpha}(x) = \hFlow(x,\alpha(x)),
\end{align*}
being in general just a $C^{\infty}$ map leaving invariant each orbit of $\hFld$, and so preserving $\func$.
However, $\hFlow_{\alpha}$ is not necessarily a diffeomorphism.

Let $\mathcal{Z}(\func)=\{\alpha\in C^{\infty}(\Mman,\RRR) \mid \df{\hFld}{\alpha} = 0 \}$ be the algebra of all smooth functions taking constant values along orbits of $\hFlow$.
Equivalently, $\mathcal{Z}(\func)$ is the \myemph{centralizer} of $\func$ with respect to the Poisson bracket induced by $\omega$, see~\S\ref{sect:Poisson_mult}.
In Lemma~\ref{lm:Z_as_func_on_KR} we also identify $\mathcal{Z}(\func)$ with a certain subset of continuous functions on the Kronrod-Reeb graph of $\func$.
In particular, $\mathcal{Z}(\func)$ contains all constant functions.

We will prove in Theorem~\ref{th:charact_Sid_f_omega} that $\hFlow_{\alpha} \in \StabilizerId{\func,\omega}$ if and only if $\alpha\in\mathcal{Z}(\func)$.
Moreover if $\func$ has at least one saddle critical point, then the correspondence $\alpha\mapsto\hFlow_{\alpha}$ is a homeomorphism $\mathcal{Z}(\func) \cong \StabilizerId{\func,\omega}$ with respect to $C^{\infty}$ topologies, and so $\StabilizerId{\func,\omega}$ is contractible.
Otherwise, that correspondence is an infinite cyclic covering map and $\StabilizerId{\func,\omega}$ is homotopy equivalent to the circle.
It will also follow that the inclusion 
\[
 \StabilizerId{\func,\omega} \ \subset \ \StabilizerId{\func}
\]
is a homotopy equivalence.
This statement can be regarded as an analogue of~\eqref{equ:Symp0_diffIdM} for $\func$-preserving diffeomorphisms.

Again it implies that the inclusion \[j:\Stabilizer{\func,\omega} \ \subset \ \StabilizerPlus{\func} \equiv \Stabilizer{\func}\cap\Diff^{+}(\Mman)\] yields an injection $j_0:\pi_0\Stabilizer{\func,\omega} \to \pi_0\StabilizerPlus{\func}$ on the sets of isotopy classes.
However, now $j_0$ is not necessarily surjective, see~\S\ref{sect:j0_non_surj}.
The reason is that $\StabilizerPlus{\func}$ has many invariant subsets, e.g.\! the sets of the from $\Mman_a = \func^{-1}(-\infty,a]$, $a\in\RRR$, and so if $h\in\Stabilizer{\func,\omega}$ interchanges connected components of $\Mman_a$, then they must have the same $\omega$-volume.

In fact, our results hold for a larger class of smooth maps $\func$ from $\Mman$ into $\RRR$ and $S^1$, see~\S\ref{sect:class_F}.
On the other hand, we also provide in \S\ref{counterexample:Zg_Sg} an example of a function with isolated critical points for which the above correspondence $\alpha\mapsto\hFlow_{\alpha}$ is not surjective.

The author is indebted to Bogdan Feshchenko for useful discussions.

\section{Preliminaries}
\subsection{Shift map}
Let $\Mman$ be a connected $n$-dimensional $C^{\infty}$ manifold, $\hFld$ be a $C^{\infty}$ vector field tangent to $\partial\Mman$ and generating a flow $\hFlow:\Mman\times\RRR\to\Mman$.
For each $\alpha\in C^{\infty}(\Mman,\RRR)$ define the following $C^{\infty}$ map $\hFlow_{\alpha}:\Mman\to\Mman$ by
\[ 
\hFlow_{\alpha}(x) = \hFlow(x,\alpha(x)),
\]
for $x\in\Mman$.
Evidently, $\hFlow_{\alpha}$ leaves invariant each orbit of $\hFlow$ and is homotopic to $\id_{\Mman}$ by the homotopy $\{\hFlow_{t\alpha}\}_{t\in[0,1]}$.
Also notice that if $\alpha\equiv t$ is a constant function, then $\hFlow_{\alpha}=\hFlow_{t}$ is a diffeomorphism belonging to the flow $\hFlow$.

For $\alpha\in C^{\infty}(\Mman,\RRR)$ we will denote by $\df{\hFld}{\alpha}$ the Lie derivative of $\alpha$ along $\hFld$.

\begin{lemma}\label{lm:jacobi_flow}{\rm\cite[Theorem~19]{Maksymenko:TA:2003}}
Let $\alpha\in C^{\infty}(\Mman,\RRR)$, $y\in\Mman$, and $z=\hFlow_{\alpha}(y)$.
Then the tangent map $T_y \hFlow_{\alpha}: T_y\Mman \to T_z\Mman$ is an isomorphism if and only if $1+\df{\hFld}{\alpha}(y) \not=0$.
\end{lemma}
\begin{remark} \rm
In fact, \cite[Lemma~20]{Maksymenko:TA:2003}, if $\alpha(y)=0$, so $z = \hFlow_{\alpha}(y)=\hFlow(y,0)=y$ is a fixed point of $\hFlow_{\alpha}$, then the determinant of $T_y \hFlow_{\alpha}: T_y\Mman \to T_y\Mman$ does not depend on a particular choice of local coordinates at $z$ and equals $1+\df{\hFld}{\alpha}(y)$.
The general case $\alpha(y)=a\not=0$ reduces to $a=0$ by observation that $\hFlow_{\alpha} = \hFlow_{\alpha-a} \circ \hFlow_{a}$.
\end{remark}
To get a global variant of Lemma~\ref{lm:jacobi_flow} notice that the correspondence $\alpha\mapsto\hFlow_{\alpha}$ can also be regarded as the following mapping 
\begin{align*}
&\sh_{\hFld}: C^{\infty}(\Mman,\RRR) \to C^{\infty}(\Mman,\Mman), &
\sh_{\hFld}(\alpha) = \hFlow_{\alpha}.
\end{align*}
It will be called the \myemph{shift map} along orbits of $\hFlow$, \cite{Maksymenko:TA:2003}, \cite{Maksymenko:OsakaJM:2011}. 
Consider the following subset of $C^{\infty}(\Mman,\RRR)$:
\begin{equation}\label{equ:Gamma_set}
\Gamma_{\hFld} = \{\alpha\in C^{\infty}(\Mman,\RRR) \mid 1 + \df{\hFld}{\alpha}>0 \},
\end{equation}
and let $\DiffId(\hFld)$ be the group of all diffeomorphisms of $\Mman$ which leave invariant each orbit of $\hFld$ and isotopic to the identity via an orbit preserving isotopy.

\begin{lemma}\label{lm:shift_are_diffeo}{\rm\cite[Theorem~19]{Maksymenko:TA:2003}}
If $\Mman$ is compact, then 
\begin{align}\label{equ:shift_being_diffeos}
&\sh(\Gamma_{\hFld}) \subset \DiffId(\hFld), & 
\Gamma_{\hFld} = \sh^{-1}\bigl(\DiffId(\hFld)\bigr).
\end{align}
In other words, suppose $\alpha\in C^{\infty}(\Mman,\RRR)$.
Then $\alpha\in\Gamma$ if and only if $\hFlow_{\alpha} \in \DiffId(\hFld)$. 
\end{lemma}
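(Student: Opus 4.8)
The statement unpacks into two inclusions: $\Gamma_{\hFld}\subseteq\sh^{-1}\bigl(\DiffId(\hFld)\bigr)$, which is the same as $\sh(\Gamma_{\hFld})\subseteq\DiffId(\hFld)$, and the reverse inclusion $\sh^{-1}\bigl(\DiffId(\hFld)\bigr)\subseteq\Gamma_{\hFld}$. I would prove them separately, in both cases using Lemma~\ref{lm:jacobi_flow} as the pointwise bridge between nondegeneracy of $T_y\hFlow_{\alpha}$ and the sign of $1+\df{\hFld}{\alpha}(y)$. For the first inclusion, fix $\alpha\in\Gamma_{\hFld}$. The key preliminary remark is that the whole segment $\{t\alpha\}_{t\in[0,1]}$ lies in $\Gamma_{\hFld}$: since the Lie derivative is linear in its argument, $1+\df{\hFld}{t\alpha}=(1-t)\cdot 1+t\bigl(1+\df{\hFld}{\alpha}\bigr)$ is a convex combination of two positive functions, hence positive. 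By Lemma~\ref{lm:jacobi_flow} this makes every $\hFlow_{t\alpha}$ a local diffeomorphism. Because $\hFld$ is tangent to $\partial\Mman$, each $\hFlow_{t\alpha}$ carries $\Int\Mman$ into itself and $\partial\Mman$ into itself, and since $\Mman$ is compact it is a proper local diffeomorphism, hence a covering map onto $\Mman$. The number of sheets is locally constant in $t$ and equals $1$ at $t=0$, where $\hFlow_{0}=\id$, so it is $1$ throughout; thus every $\hFlow_{t\alpha}$ is a genuine diffeomorphism. Concretely one can run this sheet-counting argument a single time, on the proper local diffeomorphism $(x,t)\mapsto(\hFlow_{t\alpha}(x),t)$ of $\Mman\times[0,1]$. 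Since each $\hFlow_{t\alpha}$ visibly leaves every orbit of $\hFld$ invariant, the family $\{\hFlow_{t\alpha}\}_{t\in[0,1]}$ is an orbit-preserving isotopy from $\id$ to $\hFlow_{\alpha}$, which is exactly what membership in $\DiffId(\hFld)$ demands.

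For the reverse inclusion, suppose $\hFlow_{\alpha}\in\DiffId(\hFld)$; in particular $\hFlow_{\alpha}$ is a diffeomorphism, so $T_y\hFlow_{\alpha}$ is an isomorphism for every $y$ and Lemma~\ref{lm:jacobi_flow} gives $1+\df{\hFld}{\alpha}(y)\neq0$ for all $y$. As $\Mman$ is connected and $1+\df{\hFld}{\alpha}$ is continuous, it has constant sign, and it remains only to exclude the negative sign. The cleanest route is to produce one point where positivity is forced. If $\hFld$ has a singular point $y_{0}$, then $\hFlow_{\alpha}(y_{0})=y_{0}$ and $\df{\hFld}{\alpha}(y_{0})=d\alpha_{y_{0}}\bigl(\hFld(y_{0})\bigr)=0$, whence $1+\df{\hFld}{\alpha}(y_{0})=1>0$; this already settles the case of a Hamiltonian field of a function possessing critical points, which is the one relevant to the paper. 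For a general compact $\Mman$ with nowhere vanishing $\hFld$, I would argue by degree: $\hFlow_{\alpha}$ is homotopic to $\id$ through the maps $\hFlow_{t\alpha}$, hence has degree $1$, so it preserves orientation; and via the linear homotopy $t\mapsto\hFlow_{t\alpha}$ together with Lemma~\ref{lm:jacobi_flow} the sign of the Jacobian of $\hFlow_{\alpha}$ coincides with that of $1+\df{\hFld}{\alpha}$, forcing the positive sign. The non-orientable case reduces to the orientable one by passing to the orientation double cover, to which $\hFld$ lifts.

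I expect the main obstacle to be the reverse inclusion, specifically excluding the globally negative sign of $1+\df{\hFld}{\alpha}$. The point is that a diffeomorphism may well be an orientation-reversing local diffeomorphism, so nondegeneracy alone does not decide the sign; the exclusion genuinely requires the global hypothesis that $\hFlow_{\alpha}$ is isotopic to the identity, i.e.\ a degree or orientation input, together with the coordinate-free identification of the Jacobian sign with the sign of $1+\df{\hFld}{\alpha}$. A secondary technical point is making the covering and sheet-counting argument of the first inclusion rigorous on a manifold with boundary, and on $\Mman\times[0,1]$ with corners; this is exactly where the tangency of $\hFld$ to $\partial\Mman$ is used, since it guarantees that $\hFlow_{t\alpha}$ respects the decomposition $\Mman=\Int\Mman\sqcup\partial\Mman$ and is therefore a proper local diffeomorphism of the pair.
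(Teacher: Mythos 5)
You should know at the outset that the paper contains no proof of this lemma at all: it is imported from \cite[Theorem~19]{Maksymenko:TA:2003}, so your argument has to stand on its own. Your forward inclusion $\sh(\Gamma_{\hFld})\subset\DiffId(\hFld)$ does stand: convexity gives $t\alpha\in\Gamma_{\hFld}$ for all $t\in[0,1]$, Lemma~\ref{lm:jacobi_flow} makes each $\hFlow_{t\alpha}$ a local diffeomorphism respecting $\partial\Mman$ and $\Int\Mman$, compactness makes $(x,t)\mapsto(\hFlow_{t\alpha}(x),t)$ a proper local diffeomorphism of the connected space $\Mman\times[0,1]$, hence a covering, and counting sheets at $t=0$ shows it is injective; the resulting family is an orbit-preserving isotopy of diffeomorphisms from $\id_{\Mman}$ to $\hFlow_{\alpha}$, which is exactly membership in $\DiffId(\hFld)$.

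The reverse inclusion, however, has a genuine gap, located precisely at the step you yourself call the crux. You assert that ``via the linear homotopy $t\mapsto\hFlow_{t\alpha}$ together with Lemma~\ref{lm:jacobi_flow} the sign of the Jacobian of $\hFlow_{\alpha}$ coincides with that of $1+\df{\hFld}{\alpha}$.'' Lemma~\ref{lm:jacobi_flow} carries no sign information (it is an isomorphism-iff-nonvanishing criterion), and the homotopy cannot supply it in exactly the case you must exclude: if $1+\df{\hFld}{\alpha}(y)<0$, then $1+t\,\df{\hFld}{\alpha}(y)$ vanishes at some $t_0\in(0,1)$, so the path of tangent maps $t\mapsto T_y\hFlow_{t\alpha}$ passes through a singular map, after which continuity says nothing about the sign of the determinant. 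As written, your argument does not rule out an orientation-preserving diffeomorphism $\hFlow_{\alpha}$ with $1+\df{\hFld}{\alpha}<0$ everywhere. The missing ingredient is the determinant formula in the Remark following Lemma~\ref{lm:jacobi_flow}, i.e.\ \cite[Lemma~20]{Maksymenko:TA:2003}: with $a=\alpha(y)$ one factors $\hFlow_{\alpha}=\hFlow_{a}\circ\hFlow_{\alpha-a}$, where $\hFlow_{\alpha-a}$ fixes $y$ and $\det T_y\hFlow_{\alpha-a}=1+\df{\hFld}{\alpha}(y)$, while the flow map $\hFlow_{a}$ preserves orientation; that, not the homotopy, identifies the two signs, and then your degree argument closes. (Also, the nowhere-vanishing case cannot be treated as peripheral: the paper applies the lemma to maps without critical points, e.g.\ on the cylinder and the torus.) Finally, note that the entire degree/orientation/double-cover apparatus is avoidable: if $1+\df{\hFld}{\alpha}\le 0$ on all of $\Mman$, then $\tfrac{d}{ds}\,\alpha(\hFlow(y,s))\le -1$ for all $s\ge 0$, so $\alpha(\hFlow(y,s))\le\alpha(y)-s\to-\infty$, contradicting boundedness of $\alpha$ on the compact manifold $\Mman$. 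This also corrects your closing assessment: excluding the negative sign does not ``genuinely require'' the isotopy-to-identity hypothesis — $\hFlow_{\alpha}$ being a mere diffeomorphism already forces $\alpha\in\Gamma_{\hFld}$, with compactness and connectedness doing all the work.
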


\subsection{Hamiltonian vector field}
Let $\Mman$ be a compact orientable surface equipped with a volume form $\omega$ and $\Pman$ be either $\RRR$ or $S^1$.
Since $\dim\Mman = 2$, $\omega$ is a closed $2$-form, and therefore it defines a symplectic structure on $\Mman$.
Then for each $C^1$ map $\func:\Mman\to\Pman$ there exists a unique vector field $\hFld$ on $\Mman$ satisfying
\begin{equation}\label{equ:ham_v_f}
d\func(z)(u) = \omega(u,\hFld(z)),
\end{equation}
for each point $z\in\Mman$ and a tangent vector $u\in T_{z}\Mman$.
This vector field is called the \myemph{Hamiltonian} vector field of $\func$ with respect to $\omega$.
For the convenience of the reader we recall its construction as it is usually defined for functions $\func:\Mman\to\RRR$ only.

Let $z\in\Mman$.
Fix local charts $\dif:\Uman\to\Mman$ and $q:J\to\Pman$ at $z$ and $\func(z)$ respectively, where $\Uman$ is an open subset of the upper half-plane 
$\Rhalf{2}=\{(x,y) \mid y\geq0\}$ and $J$ is an open interval in $\RRR$.
Decreasing $\Uman$ one can assume that $\func(\dif(\Uman)) \subset q(J)$.
Then the map $\hat{\func}=q^{-1} \circ \func \circ \dif:\Uman \to J$ is called a \myemph{local representation} of $\func$ at $z$.

Now if in coordinates $(x,y)$ on $\Uman$ we have that $\omega(x,y) = \gamma(x,y) dx \wedge dy$ for some non-zero $C^{\infty}$ function $\gamma:\Uman\to\RRR\setminus\{0\}$, then 
\begin{equation}\label{equ:ham_v_f_formula}
\hFld(x,y) = \frac{1}{\gamma(x,y)}\bigl(- \hat{\func}'_{y} \tfrac{\partial}{\partial x} + \hat{\func}'_{x} \tfrac{\partial}{\partial y}\bigr).
\end{equation}

A definition of $\hFld$ that does not use local coordinates can be given as follows.
Since the restriction of $\omega$ to each tangent space $T_x\Mman$ is a non-degenerate skew-symmetric form, it follows that $\omega$ yields a bundle isomorphism
\[
\xymatrix{
T\Mman \ar[rd] \ar[rr]^-{\psi} & & T^{*}\Mman \ar[ld] \\
&\Mman
}
\]
defined by the formula $\psi(u)(v) = \omega(u, v)$ for all $u,v\in T_x\Mman$ and $x\in\Mman$.

Further notice, that the tangent bundle of $\Pman$ is trivial, so we have the \myemph{unit} section
\begin{align*}
 s&: \Pman \to T\Pman\equiv\Pman\times\RRR, & s(q) &= (q,1).
\end{align*}

Now for a $C^1$ map $f:\Mman\to\Pman$ its \myemph{differential} $d\func:\Mman\to T^{*}\Mman$ and the \myemph{Hamiltonian} vector field $\hFld:\Mman\to T\Mman$ are unique maps for which the following diagram is commutative:
\[
\xymatrix{
TM \ar[rr]^-{\psi} && T^{*}\Mman && T^{*}\Pman \equiv \Pman\times\RRR \ar[ll]_-{T^{*}f} \\
&& \Mman \ar[rr]^-{\func} \ar[ull]^-{\hFld} \ar[u]_-{d\func} && \Pman \ar[u]_-{s} 
}
\]
Thus $d\func = T^{*}\func \circ s \circ \func$, and $\hFld = \psi^{-1}\circ d\func$.
It follows that 
\begin{equation}\label{equ:df_F_0}
\df{\hFld(z)}{\func} = \omega(\hFld(z), \hFld(z)) = 0,
\end{equation}
as $\omega$ is skew-symmetric, and so \myemph{$\hFld$ is tangent to level curves of $\func$}.

Suppose, in addition, that $\func$ takes constant values at boundary components of $\Mman$.
Then, due to~\eqref{equ:df_F_0}, $\hFld$ is tangent to $\partial\Mman$, and therefore it yields a flow $\hFlow:\Mman\times\RRR\to\Mman$.
It also follows from~\eqref{equ:df_F_0} that each diffeomorphism $\hFlow_{t}:\Mman\to\Mman$ preserves $\func$, in the sense that $\func\circ\hFlow_{t} = \func$.
Moreover, the well known Liouville's theorem claims that each diffeomorphism $\hFlow_{t}$ also preserves $\omega$.
In fact, that theorem is a simple consequence of Cartan's identity:
\begin{align}\label{equ:Ht_prev_omega}
\mathcal {L}_{\hFld}\omega &=
   d( \iota_{\hFld}\omega ) + \iota_{\hFld} d\omega  = 
   d( d\func ) + \iota_{\hFld} 0 = 0,
\end{align}
since $\iota_{\hFld}\omega = \omega(\hFld, \cdot) = d\func$ by~\eqref{equ:ham_v_f}, and $d\omega=0$ as $\dim\omega=\dim\Mman$.

\subsection{Poisson multiplication}\label{sect:Poisson_mult}
Let $\Qman$ be another one-dimensional manifold without boundary, so $\Qman$ is either $\RRR$ or $S^1$ as well as $\Pman$.  
Then $\omega$ yields a \myemph{Poisson} multiplication
\begin{equation}\label{equ:poisson_mult_map}
\{\cdot, \cdot\}: C^{\infty}(\Mman,\Pman) \times C^{\infty}(\Mman,\Qman) \longrightarrow C^{\infty}(\Mman,\mathbb{R})
\end{equation}
defined by one of the following equivalent formulas:
\begin{equation}\label{equ:poisson_mult_formula}
\{\func, \gfunc\} := 
\omega(\hFld_{\func}, \hFld_{\gfunc})  =
\psi(\hFld_{\func})(\hFld_{\gfunc}) =
\df{\hFld_{\func}}{\gfunc} = -\df{\hFld_{\gfunc}}{\func},
\end{equation}
where $\hFld_{\func}$ and $\hFld_{\gfunc}$ are Hamiltonian vector fields of $\func\in C^{\infty}(\Mman,\Pman)$ and $\gfunc\in C^{\infty}(\Mman,\Qman)$ respectively.

In particular, for each $\func\in C^{\infty}(\Mman,\Pman)$ one can define its \myemph{annulator} with respect to~\eqref{equ:poisson_mult_formula} by
\begin{equation}\label{equ:f_annulator}
\ZentrQ{\Qman}{\func} = 
\{
\gfunc \in C^{\infty}(\Mman,\Qman) \mid \df{\hFld_{\func}}{\gfunc} = \{\func,\gfunc\} = 0
\}.
\end{equation}
Thus $\ZentrQ{\Qman}{\func}$ consists of all maps $\gfunc\in C^{\infty}(\Mman,\Qman)$ taking constant values along orbits of the Hamiltonian vector field $\hFld_{\func}$.
It follows from~\eqref{equ:poisson_mult_formula} that $\gfunc\in \ZentrQ{\Qman}{\func}$ iff $\func\in\ZentrQ{\Pman}{\gfunc}$.

When $\Pman=\Qman=\RRR$, this multiplication is the usual \myemph{Poisson bracket}, and $\ZentrQ{\RRR}{\func}$ is the \myemph{centralizer} of $\func$, see~\cite[\S3]{McDuffSalamon:SympTop:1995}.

\subsection{Class $\FF(\Mman,\Pman)$}\label{sect:class_F}
Let $\FF(\Mman,\Pman)$ be the subspace of $C^{\infty}(\Mman,\Pman)$ consisting of maps $\func$ satisfying the following two axioms:
\begin{axiom}{\AxBd}
The map $\func$ takes a constant value at each connected component of $\partial\Mman$ and has no critical points on $\partial\Mman$.
\end{axiom}
\begin{axiom}{\AxCrPt}
For every critical point $z$ of $\func$ there is a local presentation $\hat\func:\RRR^2\to\RRR$ of $\func$ near $z$ in which $\hat\func$ is a homogeneous polynomial $\RRR^2\to\RRR$ without multiple factors.
\end{axiom}
In particular, since the polynomial $\pm x^2 \pm y^2$ (a non-degenerate singularity) is homogeneous and has no multiple factors, we see that $\FF(\Mman,\Pman)$ contains an open and everywhere dense subset $\Morse(\Mman,\Pman)$ consisting of maps satisfying Axiom~\AxBd\ and having non-degenerate critical points only.

Figure~\ref{fig:isol_crit_pt} describes possible singularities satisfying Axiom~\AxCrPt.

\begin{figure}
\begin{tabular}{ccccccccc}
\includegraphics[height=2cm]{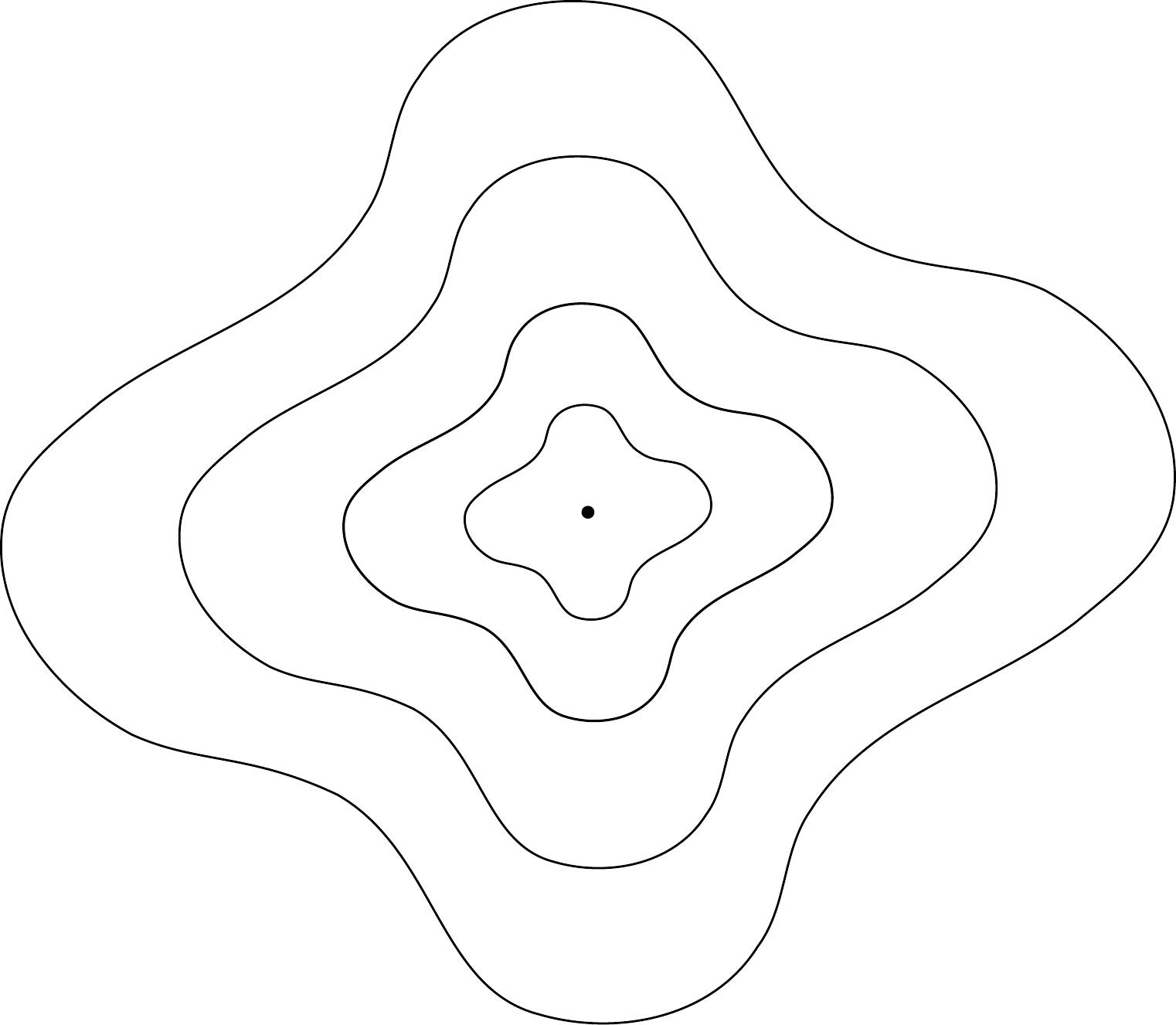} & \quad
\includegraphics[height=2cm]{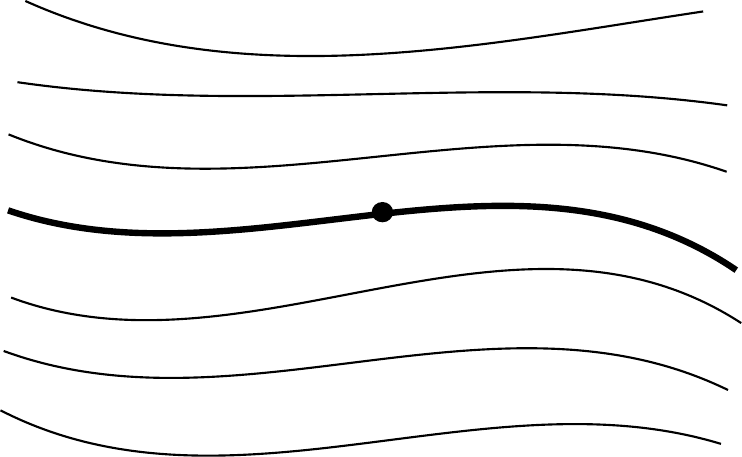} & \quad
\includegraphics[height=2cm]{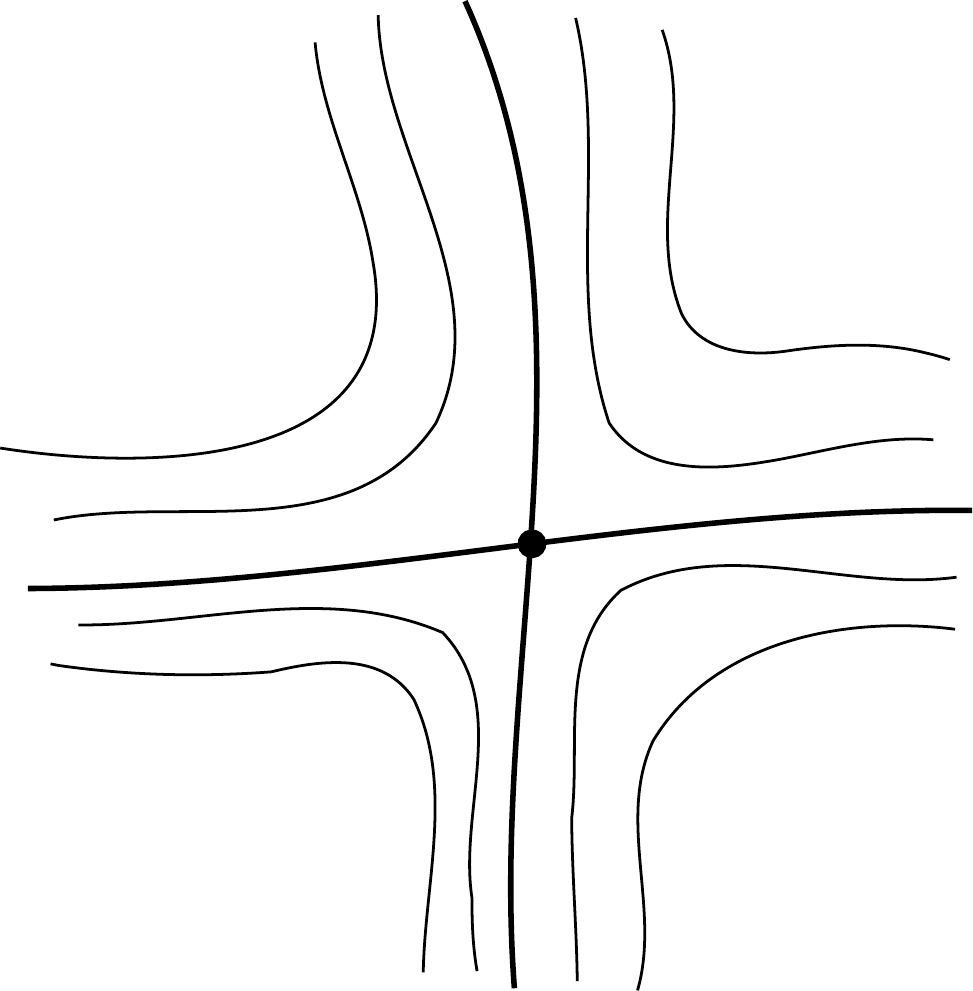} & \quad
\includegraphics[height=2cm]{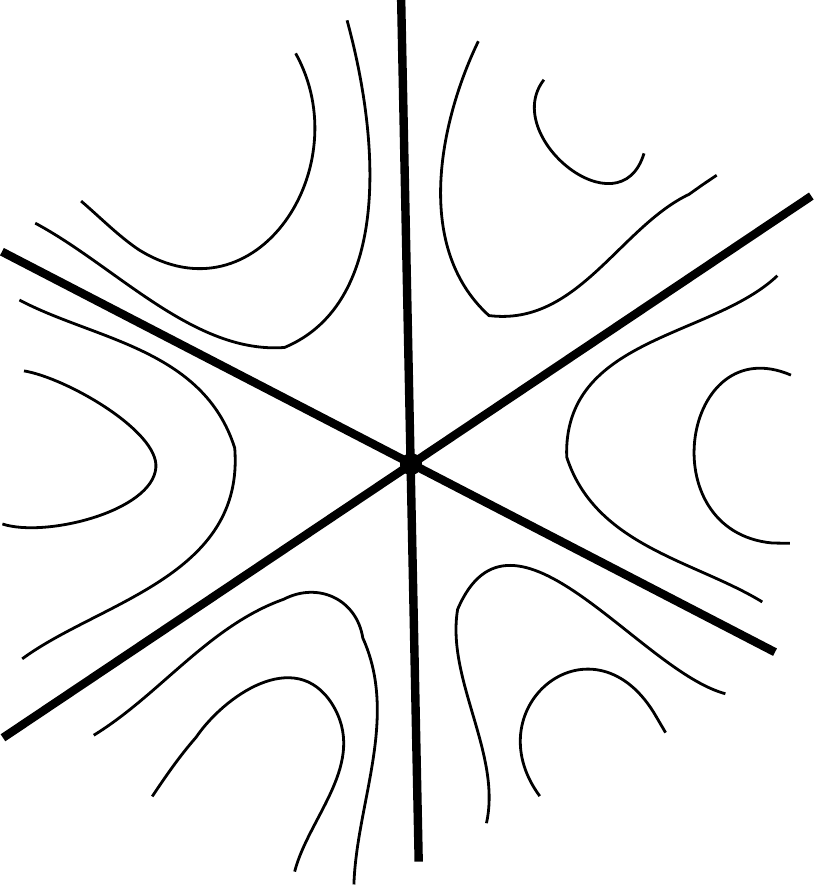} 
\end{tabular}
\caption{Level sets foliation for singularities of Axiom~\AxCrPt}
\label{fig:isol_crit_pt}
\end{figure}

\begin{definition}\label{defn:ham_like_vf}
We will say that a vector field $\fld$ on $\Mman$ is \myemph{Hamiltonian like} for $\func\in\FF(\Mman,\Pman)$ if 
\begin{enumerate}[leftmargin=*, label=$(\alph*)$]
\item\label{enum:dfF_0}
$\df{\fld}{\func}=0$, and, in particular, $\fld$ is tangent to $\partial\Mman$ and generates a flow on $\Mman$;
\item\label{enum:Fz_0__dfz_0}
$\fld(z)=0$ if and only if $z$ is a critical point of $\func$;
\item\label{enum:ham_like}
for each $z$ critical point of $\func$ there exists a local representation $\hat\func:\RRR^2\to\RRR$ of $\func$ as a homogeneous polynomial without multiple factors such that in these coordinates $\fld(x,y) = -\hat\func'_y \tfrac{\partial}{\partial x} + \hat\func'_x \tfrac{\partial}{\partial y}$.
\end{enumerate}
\end{definition}
One can easily prove that for each $\func\in\FF(\Mman,\Pman)$ there exists a Hamiltonian like vector field, \cite[Lemma~5.1]{Maksymenko:AGAG:2006}.

Notice also that every Hamiltonian vector field $\hFld$ of $\func$ has properties~\ref{enum:dfF_0} and~\ref{enum:Fz_0__dfz_0} of Definition~\ref{defn:ham_like_vf}.
Moreover, if $\hFld$ is also a Hamiltonian like, then due to~\eqref{equ:ham_v_f_formula} in the corresponding coordinates satisfying property \ref{enum:ham_like} of Definition~\ref{defn:ham_like_vf} we have that $\omega = dx\wedge dy$.

\begin{lemma}\label{lm:H_lF}
Let $\fld$ be any Hamiltonian like vector field for $\func\in\FF(\Mman,\Pman)$, and $\hFld$ be the Hamiltonian vector field for $\func$ with respect to $\omega$.
Then there exists an everywhere non-zero $C^{\infty}$ function $\lambda:\Mman\to\RRR\setminus\{0\}$ such that $\hFld = \lambda \fld$.
\end{lemma}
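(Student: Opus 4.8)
The plan is to compare $\hFld$ and $\fld$ on the set of regular points of $\func$ and then to show that their ratio extends smoothly and without zeros across the critical points. First I would recall that, by property~\ref{enum:dfF_0} of Definition~\ref{defn:ham_like_vf} together with~\eqref{equ:df_F_0}, both $\fld$ and $\hFld$ are tangent to the level curves of $\func$; moreover both vanish exactly at the critical points of $\func$. For $\fld$ this is property~\ref{enum:Fz_0__dfz_0}, while for $\hFld$ it follows from the identity $\hFld = \psi^{-1}\circ d\func$ and the fact that $\psi$ is a bundle isomorphism, so that $\hFld(z)=0$ if and only if $d\func(z)=0$. Since the critical points of $\func\in\FF(\Mman,\Pman)$ are isolated (a homogeneous polynomial without multiple factors has an isolated critical point at the origin, by Euler's identity), the regular set $U=\Mman\setminus\fSing$ is open and everywhere dense.

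On $U$ the kernel of $d\func$ is a $1$-dimensional subspace of each tangent space, and both $\fld$ and $\hFld$ are non-zero vectors lying in it. Hence at every $z\in U$ they are linearly dependent, and there is a unique scalar $\lambda(z)\in\RRR\setminus\{0\}$ with $\hFld(z)=\lambda(z)\,\fld(z)$. Working in an arbitrary local chart and dividing the corresponding locally non-vanishing components of $\hFld$ and $\fld$ shows that $\lambda:U\to\RRR\setminus\{0\}$ is $C^{\infty}$.

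It then remains to extend $\lambda$ over the critical points, and this is the main obstacle. Fix a critical point $z$ and choose the local coordinates $(x,y)$ furnished by property~\ref{enum:ham_like} of Definition~\ref{defn:ham_like_vf}, in which $\func$ has a homogeneous polynomial representation $\hat\func$ and $\fld = -\hat\func'_y\,\tfrac{\partial}{\partial x}+\hat\func'_x\,\tfrac{\partial}{\partial y}$. Writing $\omega = \gamma\,dx\wedge dy$ in these coordinates, where $\gamma$ is an everywhere non-zero $C^{\infty}$ function because $\omega$ is a volume form, formula~\eqref{equ:ham_v_f_formula} gives $\hFld = \tfrac{1}{\gamma}\bigl(-\hat\func'_y\,\tfrac{\partial}{\partial x}+\hat\func'_x\,\tfrac{\partial}{\partial y}\bigr)=\tfrac{1}{\gamma}\,\fld$ throughout this chart. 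Thus on the regular points of the chart $\lambda$ coincides with $1/\gamma$, which is $C^{\infty}$ and non-vanishing at $z$ itself, so it furnishes the required extension of $\lambda$ across $z$. Since $U$ is dense, the values already defined on $U$ determine these local extensions uniquely, so they assemble into a single $C^{\infty}$ function $\lambda:\Mman\to\RRR\setminus\{0\}$ satisfying $\hFld=\lambda\fld$. The delicate point is exactly this extension across the common zeros of the two fields, and it is resolved by the observation that in the adapted coordinates $\fld$ is precisely the numerator occurring in the Hamiltonian formula~\eqref{equ:ham_v_f_formula}, so that the ratio equals the manifestly smooth and non-vanishing factor $1/\gamma$.
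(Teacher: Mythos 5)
Your proof is correct and follows essentially the same route as the paper's: define $\lambda$ on the regular set where both fields are non-zero and parallel, then extend it across each critical point using the adapted coordinates of property~\ref{enum:ham_like}, where formula~\eqref{equ:ham_v_f_formula} shows $\hFld = \tfrac{1}{\gamma}\fld$ with $\omega = \gamma\, dx\wedge dy$, so that $\lambda = 1/\gamma$ near the critical point. The only differences are expository (you spell out the linear-dependence and density/gluing details that the paper leaves implicit), not mathematical.
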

\begin{proof}
Denote by $\fSing$ the set of critical point of $\func$, being also the set of zeros of $\hFld$ as well as of $\fld$.
Since $\fld$ and $\hFld$ are parallel and non-zero on $\Mman\setminus\fSing$, it follows that there exists a $C^{\infty}$ non-zero function $\lambda:\Mman\setminus\fSing\to\RRR$ such that $\hFld = \lambda\fld$.
It remains to show that $\lambda$ can be defined by non-zero values on $\fSing$ to give a $C^{\infty}$ function on all of $\Mman$.

Let $z$ be a critical point of $\func$.
Then by definition of Hamiltonian like vector field there exists a local representation $\hat{\func}:\RRR^2\to \RRR$ of $\func$ such that $z=(0,0)\in\RRR^2$, $\hat{\func}$ is a homogeneous polynomial without multiple factors, and $\fld = - \hat{\func}'_{y} \tfrac{\partial}{\partial x} + \hat{\func}'_{x} \tfrac{\partial}{\partial y}$.

Then $\omega(x,y) = \gamma(x,y) dx \wedge dy$ for some non-zero $C^{\infty}$ function $\gamma$, and by formula~\eqref{equ:ham_v_f_formula}, we have $\hFld = \tfrac{1}{\gamma} \fld$ on $\Uman\setminus z$.
Hence $\lambda = 1/\gamma$, and so $\lambda$ smoothly extends to all of $\Uman$ by $\lambda(z) = 1/\gamma(z)$.
\end{proof}

The following statement is a particular case of results of \cite{Maksymenko:CEJM:2009} on parameter rigidity.
\begin{corollary}{c.f. \rm \cite[\S4 \& Theorem~11.1]{Maksymenko:CEJM:2009}}
For any two Hamiltonian like vector fields $\fld_1$ and $\fld_2$ there exists an everywhere non-zero $C^{\infty}$ function $\mu:\Mman\to\RRR\setminus\{0\}$ such that $\fld_1 = \mu\fld_2$.
\end{corollary}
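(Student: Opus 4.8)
The plan is to deduce the statement directly from Lemma~\ref{lm:H_lF}, using the Hamiltonian vector field of $\func$ as a common intermediary. Throughout, $\func\in\FF(\Mman,\Pman)$ is the fixed map for which $\fld_1$ and $\fld_2$ are Hamiltonian like, $\omega$ is the fixed volume form on $\Mman$, and $\hFld$ denotes the Hamiltonian vector field of $\func$ with respect to $\omega$.

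First I would apply Lemma~\ref{lm:H_lF} to $\fld_1$, obtaining an everywhere non-zero $C^{\infty}$ function $\lambda_1:\Mman\to\RRR\setminus\{0\}$ with $\hFld=\lambda_1\fld_1$. Applying the same lemma to $\fld_2$ yields an everywhere non-zero $C^{\infty}$ function $\lambda_2:\Mman\to\RRR\setminus\{0\}$ with $\hFld=\lambda_2\fld_2$. Equating the two expressions for $\hFld$ gives the global identity $\lambda_1\fld_1=\lambda_2\fld_2$ on all of $\Mman$. I would then set $\mu=\lambda_2/\lambda_1$. Since $\lambda_1$ is nowhere zero and smooth, $\mu$ is a well-defined $C^{\infty}$ function, and it is itself nowhere zero because $\lambda_2$ never vanishes; dividing the identity above by $\lambda_1$ gives $\fld_1=\mu\fld_2$, as required. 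Note that this equality is valid even along the common zero set $\fSing$ of $\fld_1$ and $\fld_2$, where both sides vanish, precisely because $\mu$ is defined by a nowhere-vanishing denominator.

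I do not expect any genuine obstacle in this argument: all the delicate work has already been carried out in the proof of Lemma~\ref{lm:H_lF}, namely the verification that the proportionality factor between a Hamiltonian like field and $\hFld$ extends smoothly and without zeros across the critical set $\fSing$, using the homogeneous-polynomial normal form of property~\ref{enum:ham_like} of Definition~\ref{defn:ham_like_vf} together with formula~\eqref{equ:ham_v_f_formula}. The only conceptual point is to compare $\fld_1$ and $\fld_2$ \emph{not} with each other --- which would again force one to analyse their common zeros --- but each separately with the single reference field $\hFld$; the desired factor $\mu$ is then produced purely algebraically as the quotient $\lambda_2/\lambda_1$ of two globally defined, nowhere-zero smooth functions.
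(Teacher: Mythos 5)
Your proposal is correct and is essentially identical to the paper's own proof: both apply Lemma~\ref{lm:H_lF} to each $\fld_i$ to get $\hFld=\lambda_1\fld_1=\lambda_2\fld_2$ with $\lambda_1,\lambda_2$ nowhere zero, and then take $\mu=\lambda_2/\lambda_1$. The only difference is that you spell out the routine verifications (smoothness, non-vanishing, validity on $\fSing$) which the paper leaves implicit.
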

\begin{proof}
It follows from Lemma~\ref{lm:H_lF} that $\hFld=\lambda_1\fld_1=\lambda_2\fld_2$ for some everywhere non-zero $C^{\infty}$ functions $\lambda_1,\lambda_2:\Mman\to\RRR\setminus\{0\}$.
Hence $\mu = \lambda_2 / \lambda_1$.
\end{proof}

\subsection{Topological type of $\StabilizerId{\func}$}
Let $\func\in\FF(\Mman,\Pman)$, $\hFld$ be a Hamiltonian like vector field for $\func$, and $\hFlow:\Mman\times\RRR\to\Mman$ be the corresponding Hamiltonian flow.
\begin{theorem}\label{th:charact_Sid}{\rm\cite{Maksymenko:reparam-sh-map}, \cite{Maksymenko:ProcIM:ENG:2010}, \cite{Maksymenko:OsakaJM:2011}}.
Let $\sh:C^{\infty}(\Mman,\RRR) \to C^{\infty}(\Mman,\Mman)$ be the shift map along orbits of $\hFld$ and
\[
\Gamma = \{\alpha\in C^{\infty}(\Mman,\RRR) \mid 1 + \df{\hFld}{\alpha}>0 \},
\]
see~\eqref{equ:Gamma_set}.
\begin{enumerate}[leftmargin=*, label={\rm(\arabic*)}]
\item\label{enum:th:charact_Sid:sh_G_in_Stabf}
$\sh(\Gamma) = \StabilizerId{\func}$ \ and \ $\Gamma = \sh^{-1}\bigl( \StabilizerId{\func} \bigr)$.
\item\label{enum:th:charact_Sid:s1}
Suppose all critical points of $\func$ are \myemph{non-degenerate local extremes}, so, in particular, $\func\in\Morse(\Mman,\Pman)$. 
Then the restriction map $\sh|_{\Gamma}:\Gamma \to \StabilizerId{\func}$ is an infinite cyclic covering, and so $\StabilizerId{\func}$ is homotopy equivalent to the circle.
More precisely, in this case there exists $\theta\in\Gamma$ such that 
\begin{enumerate}[leftmargin=7ex, label={\rm(\roman*)}]
\item\label{enum:th:charact_Sid:s1:theta_pos}
$\theta>0$ on all of $\Mman$;
\item\label{enum:th:charact_Sid:s1:theta_per}
each non-constant orbit $\gamma$ of $\fld$ is periodic, and $\theta$ takes a constant value on $\gamma$ being an positive integral multiple of the period $\Per(\gamma)$ of $\gamma$;
\item\label{enum:th:charact_Sid:s1:Z_action}
there exists a free action of $\ZZZ$ on $\Gamma$ defined by $n*\alpha = \alpha + n\theta$, for $n\in\ZZZ$ and $\alpha\in\Gamma$, such that 
the map $\sh$ is a composite
\begin{equation}\label{equ:shH_decomp}
\sh: \Gamma \xrightarrow{~~~~~~p~~~~~~} \Gamma/\ZZZ \xrightarrow[\cong]{~~~~~~~r~~~~~~~} \StabilizerId{\func},
\end{equation}
where $p$ is a projection onto the factor space $\Gamma/\ZZZ$ endowed with the corresponding final topology, and $r$ is a homeomorphism.
\end{enumerate}
\item\label{enum:th:charact_Sid:pt}
Suppose $\func$ has a critical point being \myemph{not a non-degenerate local extreme}.
Then $\sh|_{\Gamma}:\Gamma \to \StabilizerId{\func}$ is a homeomorphism, and so $\StabilizerId{\func}$ is contractible.
\end{enumerate}
\end{theorem}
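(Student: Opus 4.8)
The plan is to establish assertion~\ref{enum:th:charact_Sid:sh_G_in_Stabf} first and then read off~\ref{enum:th:charact_Sid:s1} and~\ref{enum:th:charact_Sid:pt} from a description of the fibres of the restricted shift map $\sh|_{\Gamma}$. For~\ref{enum:th:charact_Sid:sh_G_in_Stabf} I would prove the two inclusions separately. The inclusion $\sh(\Gamma)\subseteq\StabilizerId{\func}$ is the elementary one: since $\alpha\mapsto\df{\hFld}{\alpha}$ is linear, $\Gamma$ is convex, and for $\alpha\in\Gamma$ and $t\in[0,1]$ the value $1+\df{\hFld}{(t\alpha)}=1+t\,\df{\hFld}{\alpha}$ is affine in $t$ and positive at both endpoints, hence positive throughout, so $t\alpha\in\Gamma$. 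By Lemma~\ref{lm:shift_are_diffeo} every $\hFlow_{t\alpha}$ is a diffeomorphism leaving each orbit of $\hFld$ invariant, and as $\func$ is constant along orbits by~\eqref{equ:df_F_0} we get $\func\circ\hFlow_{t\alpha}=\func$; thus $t\mapsto\hFlow_{t\alpha}$ is a path in $\Stabilizer{\func}$ from $\id$ to $\hFlow_{\alpha}$, so $\hFlow_{\alpha}\in\StabilizerId{\func}$. For the reverse inclusion the main point is the structural claim that every $\dif\in\StabilizerId{\func}$ is a shift $\dif=\hFlow_{\alpha}$. I would obtain this in two steps: being isotopic to $\id$ inside $\Stabilizer{\func}$, $\dif$ induces a path in the (discrete) automorphism group of the Kronrod--Reeb graph, hence acts trivially on it and preserves every connected component of every level set; using the local models of Axiom~\AxCrPt\ near the critical points one upgrades this to invariance of every orbit of $\hFld$. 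Then an orbit-preserving diffeomorphism is a time shift along each orbit, and the shift-map technique of~\cite{Maksymenko:TA:2003} (gluing local shift functions over canonical neighbourhoods of the critical points and the intervening regular regions) produces one smooth $\alpha$ with $\dif=\hFlow_{\alpha}$. Since then $\hFlow_{t\alpha}$ is orbit preserving, $\dif\in\DiffId(\hFld)$ and Lemma~\ref{lm:shift_are_diffeo} gives $\alpha\in\Gamma$; this yields $\sh(\Gamma)=\StabilizerId{\func}$ and, with Lemma~\ref{lm:shift_are_diffeo}, $\Gamma=\sh^{-1}(\StabilizerId{\func})$.

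Next I would analyse the fibres. If $\hFlow_{\alpha}=\hFlow_{\beta}$ with $\alpha,\beta\in\Gamma$, then $\delta:=\alpha-\beta$ satisfies $\hFlow(x,\delta(x))=x$ for all $x$, so $\delta$ is constant on each orbit and equals a period of that orbit; on every non-closed orbit $\delta\equiv0$, and on a periodic orbit $\gamma$ one has $\delta=n\,\Per(\gamma)$ with $n$ a locally constant integer. Under the hypothesis of~\ref{enum:th:charact_Sid:pt} there is a critical point that is not a non-degenerate local extreme, and the decisive feature is that near such a point the orbits are either non-closed, forcing $\delta\equiv0$ outright, or periodic with periods tending to $+\infty$ (at a saddle along the separatrices, at a degenerate extreme as the orbits shrink, by homogeneity). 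Since $\delta$ is continuous and finite, the integer $n$ on any orbit family abutting such a point must vanish, and a connectivity argument along the Kronrod--Reeb graph then forces $\delta\equiv0$ on all of $\Mman$. Hence $\sh|_{\Gamma}$ is injective; combined with the surjectivity from~\ref{enum:th:charact_Sid:sh_G_in_Stabf} and continuity of the inverse (again from the shift-map theory) it is a homeomorphism, and as $\Gamma$ is convex, hence contractible, so is $\StabilizerId{\func}$.

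Under the hypothesis of~\ref{enum:th:charact_Sid:s1} every critical point is a non-degenerate local extreme, so every regular orbit is a periodic circle whose period stays finite and positive even as the orbit shrinks to a centre. The core construction is a function $\theta\in C^{\infty}(\Mman,\RRR)$ that is constant on each orbit, equal there to a positive integer multiple of $\Per(\gamma)$, strictly positive, and smooth across the whole surface including the centres, which is~\ref{enum:th:charact_Sid:s1:theta_pos}--\ref{enum:th:charact_Sid:s1:theta_per}. Granting $\theta$, one has $\hFlow_{\theta}=\id$ and, since $\theta$ is constant along orbits, $\df{\hFld}{\theta}=0$; hence $n*\alpha=\alpha+n\theta$ defines a free $\ZZZ$-action preserving $\Gamma$, and by the fibre computation $\hFlow_{\alpha}=\hFlow_{\beta}$ holds exactly when $\alpha-\beta\in\ZZZ\theta$. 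Consequently $\sh|_{\Gamma}$ descends to a homeomorphism $\Gamma/\ZZZ\xrightarrow{\ \cong\ }\StabilizerId{\func}$ and is an infinite cyclic covering, which is~\ref{enum:th:charact_Sid:s1:Z_action}. Finally $\Gamma$ is contractible and the $\ZZZ$-action is that of a covering space, so $\Gamma/\ZZZ$ is a $K(\ZZZ,1)$, i.e.\ homotopy equivalent to $S^1$.

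The main obstacle is concentrated in two places, both exactly where the hypotheses on the singularities enter. The first is the surjectivity in~\ref{enum:th:charact_Sid:sh_G_in_Stabf}: writing an arbitrary $\func$-preserving diffeomorphism isotopic to the identity as a single smooth shift $\hFlow_{\alpha}$ requires the delicate local-to-global analysis near the homogeneous-polynomial singularities of Axiom~\AxCrPt\ and the gluing of locally defined shift functions, and it is here that one relies on the machinery of~\cite{Maksymenko:TA:2003} and~\cite{Maksymenko:AGAG:2006}. The second is the construction of the globally smooth, strictly positive period function $\theta$ in~\ref{enum:th:charact_Sid:s1}, together with the proof that $\ZZZ\theta$ exhausts the fibre over $\id$; the subtlety is matching the orbitwise periods smoothly at the non-degenerate centres. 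Everything else---the forward inclusion, the fibre identities, and the homotopy-theoretic conclusions---is then formal.
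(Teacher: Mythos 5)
Your proposal is a correct outline, but of a different proof than the paper's, and as written it does not cover the case the paper actually needs. What you sketch is the \emph{direct} argument --- part~\ref{enum:th:charact_Sid:sh_G_in_Stabf} by hand, a fibre analysis of $\sh|_{\Gamma}$, periods blowing up at saddles and degenerate extrema versus a smooth period-multiple function $\theta$ at centres --- and this is precisely the strategy of the cited papers \cite{Maksymenko:TA:2003}, \cite{Maksymenko:AGAG:2006}, \cite{Maksymenko:ProcIM:ENG:2010}; the two steps you defer to that machinery (realizing every $\dif\in\StabilizerId{\func}$ as a single smooth shift $\hFlow_{\alpha}$, and constructing $\theta$) are exactly where the content of those papers lies. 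The paper's own proof is different in kind: it reproves none of this, but takes the whole theorem as known for a \emph{Hamiltonian like} field $\fld$ (Definition~\ref{defn:ham_like_vf}), whose local form near each singularity is literally the polynomial model, and \emph{transfers} it to $\hFld=\lambda\fld$ for an arbitrary nowhere-zero $\lambda$, hence by Lemma~\ref{lm:H_lF} to the Hamiltonian field of $\func$ with respect to $\omega$. The transfer goes through $\sigma(x,s)=\int_0^s\lambda(\hFlow(x,t))\,dt$: the map $\alpha\mapsto\sigma(\cdot,\alpha(\cdot))$ is a homeomorphism of $C^{\infty}(\Mman,\RRR)$ intertwining $\sh_{\hFld}$ with $\sh_{\fld}$ and carrying $\Gamma_{\hFld}$ onto $\Gamma_{\fld}$, and $\DiffId(\hFld)=\DiffId(\fld)$ since the two fields have the same orbits; the three assertions of the theorem then transport.

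The gap: the field to which Theorem~\ref{th:charact_Sid} is applied in the proof of Theorem~\ref{th:charact_Sid_f_omega} is the Hamiltonian field of $\func$ with respect to an \emph{arbitrary} volume form $\omega$, and by \eqref{equ:ham_v_f_formula} its local expression near a critical point is $\tfrac{1}{\gamma}\bigl(-\hat{\func}'_{y}\tfrac{\partial}{\partial x}+\hat{\func}'_{x}\tfrac{\partial}{\partial y}\bigr)$, a nonvanishing multiple of the polynomial model rather than the model itself. Your decisive steps are argued from the model: period blow-up ``by homogeneity'', the local gluing of shift functions, smoothness of $\theta$ at the centres --- and the references you invoke establish these for Hamiltonian like fields. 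Some of this survives a nonvanishing time change trivially (the period blow-up does, since the factor is bounded away from $0$ and $\infty$ near the singular point), but the surjectivity in~\ref{enum:th:charact_Sid:sh_G_in_Stabf}, the smoothness of $\theta$, and the covering structure in~\ref{enum:th:charact_Sid:s1} do not follow formally; filling exactly this hole is the entire content of the paper's proof. Separately, a smaller circularity: you deduce $\hFlow_{\alpha}\in\DiffId(\hFld)$ (in order to apply Lemma~\ref{lm:shift_are_diffeo}) from the ``isotopy'' $\{\hFlow_{t\alpha}\}_{t\in[0,1]}$, but these maps are diffeomorphisms only when $t\alpha\in\Gamma$, which is what you are trying to prove. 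This part is repairable: by Lemma~\ref{lm:jacobi_flow}, if $\hFlow_{\alpha}$ is a diffeomorphism then $1+\df{\hFld}{\alpha}$ never vanishes, hence has constant sign on the connected surface $\Mman$, and it equals $1$ at any critical point of $\func$; only the no-critical-point cases (cylinder and torus) then require a separate argument.
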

\begin{proof}
In fact, Theorem~\ref{th:charact_Sid} is stated and proved in~\cite{Maksymenko:ProcIM:ENG:2010} for any \myemph{Hamiltonian like} vector field $\fld$ of $\func$.
The advantage of using Hamiltonian like vector fields is that we have \myemph{precise} formulas for $\fld$ near critical points of $\func$.

Let $\lambda:\Mman\to\RRR$ be every where non-zero $C^{\infty}$ function and $\hFld = \lambda \fld$.
We will deduce from results of~\cite{Maksymenko:reparam-sh-map} that Theorem~\ref{th:charact_Sid} also holds for $\hFld$.
Due to Lemma~\ref{lm:H_lF} this includes the case when $\hFld$ is Hamiltonian.

Let $\flow, \hFlow:\Mman\times\RRR\to\Mman$ be the flows of $\fld$ and $\hFld=\lambda\fld$ respectively, 
\[\sh_{\fld},\sh_{\hFld}:C^{\infty}(\Mman,\RRR) \to C^{\infty}(\Mman,\Mman)\] be the corresponding shift maps, 
$\image(\sh_{\hFld})$, $\image(\sh_{\fld})$ be their images in $C^{\infty}(\Mman,\Mman)$, and $\Gamma_{\fld}$, $\Gamma_{\hFld}$ be corresponding the subsets of $C^{\infty}(\Mman,\RRR)$ defined by~\eqref{equ:Gamma_set}.
Define the following $C^{\infty}$ function
\begin{align*}
&\sigma:\Mman\times\RRR\to\RRR, &
\sigma(x,s) = &
\int_{0}^{s}\lambda(\hFld(x,t))dt.
\end{align*}
Then it is well known and easy to see, e.g.~\cite{Maksymenko:reparam-sh-map}, that for each $\alpha\in C^{\infty}(\Mman,\RRR)$ we have that
\begin{equation}\label{equ:Ha_Fga}
\hFlow(x,\alpha(x)) = \flow(x,\sigma(x,\alpha(x))).
\end{equation}
Consider the map
\begin{align*}
&\gamma:C^{\infty}(\Mman,\RRR) \to C^{\infty}(\Mman,\RRR), &
\gamma(\alpha)(x) &= \sigma(x,\alpha(x)).
\end{align*}
Evidently, $\gamma$ is continuous with respect to $C^{\infty}$ topologies.
Moreover, \eqref{equ:Ha_Fga} means that $\hFlow_{\alpha} = \flow_{\gamma(\alpha)}$ for all $\alpha\in C^{\infty}(\Mman,\RRR)$.
Hence $\sh_{\hFld} = \sh_{\fld}\circ\gamma$, $\image(\sh_{\hFld}) \subset \image(\sh_{\fld})$, and we get the following commutative diagram:
\[
\xymatrix{
\Gamma_{\hFld} \ar@{^(->}[r] & C^{\infty}(\Mman,\RRR) \ar[r]^{\sh_{\hFld}} \ar[d]_{\gamma} & \image(\sh_{\hFld}) \ar@{^(->}[d] \ar@{^(->}[r] & C^{\infty}(\Mman,\Mman) \\
\Gamma_{\fld} \ar@{^(->}[r] & C^{\infty}(\Mman,\RRR) \ar[r]^{\sh_{\fld}} & \image(\sh_{\fld}) \ar@{^(->}[r] & C^{\infty}(\Mman,\Mman) \\
}
\]
Since $\lambda\not=0$ everywhere, one can interchange $\fld = \tfrac{1}{\lambda}\hFld$ and $\hFld$.
Hence by the same arguments as above we get that $\image(\sh_{\hFld}) = \image(\sh_{\fld})$ and $\gamma$ is a homeomorphism.
Also notice that the orbit structures of $\fld$ and $\hFld$ coincide.
Hence $\DiffId(\fld) = \DiffId(\hFld)$, and so
\begin{align*}
\Gamma_{\hFld} 
&\stackrel{\eqref{equ:shift_being_diffeos}}{=\!=\!=} \sh_{\hFld}^{-1} \bigl( \image(\sh_{\hFld}) \cap \DiffId(\hFld) \bigr) 
= \sh_{\hFld}^{-1} \bigl( \image(\sh_{\fld}) \cap \DiffId(\fld) \bigr) \\
&= \gamma^{-1} \circ \sh_{\fld}^{-1} \bigl( \image(\sh_{\fld}) \cap \DiffId(\fld) \bigr) 
\stackrel{\eqref{equ:shift_being_diffeos}}{=\!=\!=} \gamma^{-1}(\Gamma_{\fld}).
\end{align*}
Thus $\gamma$ yields a homeomorphism of $\Gamma_{\hFld}$ onto $\Gamma_{\fld}$.
Since Theorem~\ref{th:charact_Sid} holds for $\fld$, we get the following commutative diagram
\[
\xymatrix{
\Gamma_{\hFld} \ar[rr]^-{\gamma}_-{\cong} \ar[rd]_-{\sh_{\hFld}} && \Gamma_{\fld} \ar[ld]^-{\sh_{\fld}} \\
& \StabilizerId{\func} 
}
\]
implying that $\sh_{\hFld}|_{\Gamma_{\hFld}}$ has the same topological properties~\ref{enum:th:charact_Sid:sh_G_in_Stabf}-\ref{enum:th:charact_Sid:pt} as $\sh_{\fld}|_{\Gamma_{\fld}}$, and so Theorem~\ref{th:charact_Sid} holds for $\hFld$ as well.
\end{proof}

\begin{remark}\rm
Let us discus the case~\ref{enum:th:charact_Sid:s1} of Theorem~\ref{th:charact_Sid} which is realized precisely for the following four types of Morse maps, see~\cite[Theorem~1.9]{Maksymenko:AGAG:2006}: 
\begin{enumerate}[leftmargin=*, label=(\Alph*)]
\item
$\Mman=S^2$ is a $2$-sphere and $f:S^2\to\Pman$ has exactly two critical points: non-degenerate local minimum and maximum;
\item
$\Mman=D^2$ is a $2$-disk and $f:D^2\to\Pman$ has exactly one critical point being a non-degenerate local extreme;
\item
$\Mman=S^1\times[0,1]$ is a cylinder and $f:S^1\times[0,1]\to\Pman$ has no critical points;
\item
$\Mman=T^2$ is a $2$-torus, $\Pman=S^1$ is a circle, and $f:T^2\to \Pman$ has no critical points.
\end{enumerate}
Due to~\ref{enum:th:charact_Sid:s1:theta_pos} and~\ref{enum:th:charact_Sid:s1:theta_per} each regular point $x\in\Mman$ of $\func$ is periodic of some period $\Per(x)$, and there exists $k_x\in\NNN$ depending on $x$ such that $\theta(x) = k_x\Per(x)$.
Hence 
\[
\hFlow_{\theta}(x) = \hFlow(x,\theta(x))
= \hFlow(x,k_x \mathrm{Per}(\gamma)) = x,
\]
and so $\hFlow_{\theta} = \id_{\Mman}$.
Moreover, if $\alpha\in\Gamma$, then
\begin{align*}
\hFlow_{\alpha + n\theta}(x) &= \hFlow(x,\alpha(x) + n\theta(x)) = 
\hFlow\bigl( \hFlow(x,n\theta(x)), \alpha(x)\bigr) = \\
&= \hFlow\bigl(x, \alpha(x)\bigr) = \hFlow_{\alpha}(x).
\end{align*}
This implies correctness of the $\ZZZ$-action from~\ref{enum:th:charact_Sid:s1:Z_action} of Theorem~\ref{th:charact_Sid} and existence of decomposition~\eqref{equ:shH_decomp} with continuous $p$ and $r$.
The principal difficulty was to prove that $r$ is a homeomorphism.
\end{remark}

The aim of the present paper is to deduce from Theorem~\ref{th:charact_Sid} a description of the homotopy type of  $\StabilizerId{\func,\omega}$, see Theorem~\ref{th:charact_Sid_f_omega} below.

\section{Main result}
Let $\Mman$ be a compact orientable surface equipped with a volume form $\omega$, $\func\in\FF(\Mman,\Pman)$, $\hFld$ be the Hamiltonian vector field of $\func$ with respect to $\omega$, $\hFlow:\Mman\times\RRR\to\Mman$ be the corresponding Hamiltonian flow, and 
\begin{align*}
&\sh:C^{\infty}(\Mman,\RRR) \to C^{\infty}(\Mman,\Mman), &
\sh(\alpha)(x) = \hFlow(x,\alpha(x))
\end{align*}
be the shift map along orbits of $\hFlow$.
Let also
\[
\mathcal{Z} = \ZentrQ{\RRR}{\func} = \{\alpha\in C^{\infty}(\Mman,\RRR) \mid \df{\hFld}{\alpha}=0 \}
\]
be the space of functions taking constant values along orbits of $\hFld$, see~\eqref{equ:f_annulator}.
Then $\mathcal{Z}$ is a linear subspace of $C^{\infty}(\Mman,\RRR)$ and is contained in $\Gamma$, see~\eqref{equ:Gamma_set}.
In particular, $\mathcal{Z}$ is contractible as well as $\Gamma$.

\begin{theorem}\label{th:charact_Sid_f_omega}
The following statements hold true.
\begin{enumerate}[leftmargin=*, label=$(\arabic*)$]
\item\label{enum:th:charact_Sid_f_omega:sh_Z_in_Stabfo}
$\sh(\mathcal{Z}) = \StabilizerId{\func,\omega} = \StabilizerId{\func} \cap \Stabilizer{\func,\omega}$ and
$\mathcal{Z} = \sh^{-1}\bigl(\StabilizerId{\func,\omega}\bigr)$.
\item\label{enum:th:charact_Sid_f_omega:s1}
If all critical points of $\func$ are non-degenerate local extremes, then the restriction $\sh|_{\mathcal{Z}}:\mathcal{Z} \to\StabilizerId{\func,\omega}$ is an infinite cyclic covering, and $\StabilizerId{\func,\omega}$ is homotopy equivalent to the circle.
\item\label{enum:th:charact_Sid_f_omega:pt}
Otherwise, $\sh|_{\mathcal{Z}}:\mathcal{Z} \to\StabilizerId{\func,\omega}$ is a homeomorphism, and so $\StabilizerId{\func,\omega}$ is contractible.
\item\label{enum:th:charact_Sid_f_omega:hom_eq}
The inclusion $\StabilizerId{\func,\omega} \subset \StabilizerId{\func}$ is a homotopy equivalence.
\item\label{enum:th:charact_Sid_f_omega:j_is_inj}
The inclusion map $j:\Stabilizer{\func,\omega} \subset \Stabilizer{\func}$ induces an injection $j_0:\pi_0\Stabilizer{\func,\omega}\to \pi_0\Stabilizer{\func}$.
\end{enumerate}
\end{theorem}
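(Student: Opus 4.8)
The plan is to obtain statement~\ref{enum:th:charact_Sid_f_omega:j_is_inj} as a purely formal consequence of the identification of identity path components contained in~\ref{enum:th:charact_Sid_f_omega:sh_Z_in_Stabfo}, so that no new geometric input is required. Write $A=\Stabilizer{\func,\omega}$ and $G=\Stabilizer{\func}$, regarded as subgroups of $\Diff(\Mman)$ with the induced $C^{\infty}$ topologies, and let $A_{0}=\StabilizerId{\func,\omega}$ and $G_{0}=\StabilizerId{\func}$ be their identity path components. Since $\Diff(\Mman)$ is a topological group, so are $A$ and $G$; consequently each identity path component is a normal subgroup (products, inverses and conjugates of paths through $\id_{\Mman}$ are again such paths), the path components of $G$ are precisely the cosets $gG_{0}$, the sets $\pi_{0}A=A/A_{0}$ and $\pi_{0}G=G/G_{0}$ are groups, and the map induced on $\pi_{0}$ by the inclusion $j\colon A\hookrightarrow G$ is the homomorphism
\[
j_{0}\colon A/A_{0}\longrightarrow G/G_{0},\qquad aA_{0}\longmapsto aG_{0}.
\]

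I would then compute its kernel. A class $aA_{0}$ is sent to the neutral class $G_{0}$ exactly when $a\in G_{0}$, and since $a\in A$ this happens precisely when $a\in A\cap G_{0}$; hence $\ker j_{0}=(A\cap G_{0})/A_{0}$. Therefore $j_{0}$ is injective if and only if $A\cap G_{0}\subset A_{0}$. The opposite inclusion $A_{0}\subset A\cap G_{0}$ always holds, because any path in $A$ joining a diffeomorphism to $\id_{\Mman}$ is in particular such a path in $G$.

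The required inclusion $A\cap G_{0}\subset A_{0}$ is now exactly the equality asserted in~\ref{enum:th:charact_Sid_f_omega:sh_Z_in_Stabfo}, namely
\[
\StabilizerId{\func,\omega}\;=\;\StabilizerId{\func}\cap\Stabilizer{\func,\omega},
\]
which reads $A_{0}=A\cap G_{0}$. Hence $\ker j_{0}$ is trivial and $j_{0}$ is injective, proving~\ref{enum:th:charact_Sid_f_omega:j_is_inj}. The one substantive step is thus not in this deduction but in~\ref{enum:th:charact_Sid_f_omega:sh_Z_in_Stabfo}, whose nontrivial half is the inclusion $\StabilizerId{\func}\cap\Stabilizer{\func,\omega}\subset\StabilizerId{\func,\omega}$: an $\func$- and $\omega$-preserving diffeomorphism that can be joined to $\id_{\Mman}$ through $\func$-preserving diffeomorphisms must in fact be joinable through diffeomorphisms preserving $\func$ and $\omega$ simultaneously. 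This is delivered by the surjectivity $\sh(\mathcal{Z})=\StabilizerId{\func,\omega}$ together with $\mathcal{Z}=\sh^{-1}(\StabilizerId{\func,\omega})$ and the contractibility of $\mathcal{Z}$, and may be taken as given here.
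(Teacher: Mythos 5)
Your formal deduction of statement~\ref{enum:th:charact_Sid_f_omega:j_is_inj} from statement~\ref{enum:th:charact_Sid_f_omega:sh_Z_in_Stabfo} is correct, and it coincides with the paper's own argument: the paper disposes of~\ref{enum:th:charact_Sid_f_omega:j_is_inj} with the single sentence that injectivity of $j_0$ follows from the relation $\StabilizerId{\func,\omega}=\StabilizerId{\func}\cap\Stabilizer{\func,\omega}$, which is exactly your computation $\ker j_0=(A\cap G_0)/A_0=A_0/A_0$ for the homomorphism of groups of path components of topological groups. So, as far as that one item goes, there is nothing to object to.

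The genuine gap is that this is the \emph{only} item you prove. The statement under consideration is all of Theorem~\ref{th:charact_Sid_f_omega}, and parts \ref{enum:th:charact_Sid_f_omega:sh_Z_in_Stabfo}--\ref{enum:th:charact_Sid_f_omega:hom_eq}, which you explicitly ``take as given'', are its entire substance; in particular, taking~\ref{enum:th:charact_Sid_f_omega:sh_Z_in_Stabfo} as given is circular, since it is itself one of the assertions to be established. The paper's proof of~\ref{enum:th:charact_Sid_f_omega:sh_Z_in_Stabfo} rests on a key lemma that your proposal never states or proves: for every $\alpha\in C^{\infty}(\Mman,\RRR)$ one has $\hFlow_{\alpha}^{*}\omega=(1+\df{\hFld}{\alpha})\cdot\omega$, see~\eqref{equ:flow_alpha_acts_on_omega}, verified in flow-box coordinates at regular points and extended by density of such points. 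This identity yields both halves of~\ref{enum:th:charact_Sid_f_omega:sh_Z_in_Stabfo}: if $\alpha\in\mathcal{Z}$ then $\hFlow_{t\alpha}\in\Stabilizer{\func,\omega}$ for all $t$, so $\hFlow_{\alpha}\in\StabilizerId{\func,\omega}$; conversely, if $\dif\in\StabilizerId{\func}\cap\Stabilizer{\func,\omega}$, then by statement~\ref{enum:th:charact_Sid:sh_G_in_Stabf} of Theorem~\ref{th:charact_Sid} we may write $\dif=\hFlow_{\alpha}$ with $\alpha\in\Gamma$, and $\hFlow_{\alpha}^{*}\omega=\omega$ together with the nonvanishing of $\omega$ forces $\df{\hFld}{\alpha}=0$, i.e.\ $\alpha\in\mathcal{Z}$. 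Parts \ref{enum:th:charact_Sid_f_omega:s1}--\ref{enum:th:charact_Sid_f_omega:hom_eq} then require further arguments, also absent from your text: one checks that the function $\theta$ of Theorem~\ref{th:charact_Sid} lies in $\mathcal{Z}$ (it is constant along orbits of $\hFld$), so that $\mathcal{Z}$ is invariant under the $\ZZZ$-action $\alpha\mapsto\alpha+n\theta$ and $\sh|_{\mathcal{Z}}$ inherits from $\sh|_{\Gamma}$ the structure of an infinite cyclic covering (respectively, of a homeomorphism in the contractible case); and for~\ref{enum:th:charact_Sid_f_omega:hom_eq} one shows that the loop $t\mapsto\sh(t\theta)$ generates both $\pi_1\StabilizerId{\func,\omega}$ and $\pi_1\StabilizerId{\func}$, so the inclusion induces an isomorphism on $\pi_1$ and hence is a homotopy equivalence. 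Without these steps your proposal proves one fifth of the theorem, conditionally on the remaining four fifths.
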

\begin{proof}
First we need the following lemma.
\begin{lemma}
Let $\alpha\in\Cinfty(\Mman,\RRR)$.
Then the action of $\hFlow_{\alpha}$ on $\omega$ is given by
\begin{align}\label{equ:flow_alpha_acts_on_omega}
\hFlow_{\alpha}^{*} \omega &= (1+\df{\hFld}{\alpha}) \cdot \omega.
\end{align}
\end{lemma}
\begin{proof}
Since the set of critical points is finite and so nowhere dense, it suffices to check this relation at regular points of $\func$ only.

So let $p$ be a regular point of $\func$.
Then $\hFld(p) \not=0$, whence there are local coordinates $(x,y)$ at $p$ in which $p=(0,0)$, $\hFld(x,y) = \tfrac{\partial}{\partial x}$, and $\hFlow(x,y,t) = (x+t,y)$ for sufficiently small $x,y,t$.
In particular, $\df{\hFld}{\alpha} = \tfrac{\partial\alpha}{\partial x}$.
We also have that $\omega(x,y) = \gamma(x,y) dx \wedge dy$ for some $C^{\infty}$ function $\gamma$.

Notice that one may also assume that $\alpha(p)=0$.
Indeed, let $b = \alpha(p)$.
Then $\hFlow_{\alpha} = \hFlow_{\alpha-b} \circ \hFlow_b$.
Since $\hFlow_b$ preserves $\omega$, see~\eqref{equ:Ht_prev_omega}, it follows that
\[
\hFlow_{\alpha}^{*}\omega = \hFlow_{\alpha-b}^{*} \circ \hFlow_b^{*} \omega = \hFlow_{\alpha-b}^{*} \omega.
\]
Thus suppose $\alpha(p)=0$, whence $\hFlow_{\alpha}(p) = p$.
Then
\begin{align*}
\hFlow_{\alpha}^{*} \omega(x,y) 
&= \gamma\circ\hFlow_{\alpha}(x,y) \, d(x+\alpha) \wedge dy \\
&= \gamma(x+\alpha, y) \, (1+\alpha'_{x}) \, dx \wedge dy \\
&= \gamma(x+\alpha, y) \, \bigl(1+\df{\hFld}{\alpha}\bigr) \, dx \wedge dy.
\end{align*}
In particular, at $p$ we have that 
\begin{align*}
\hFlow_{\alpha}^{*} \omega(p) = (1+\df{\hFld}{\alpha}(p)) \cdot \omega(p),
\end{align*}
which proves~\eqref{equ:flow_alpha_acts_on_omega}.
\end{proof}

Now we can complete Theorem~\ref{th:charact_Sid_f_omega}.

\ref{enum:th:charact_Sid_f_omega:sh_Z_in_Stabfo}
Let us check that 
\begin{equation}\label{equ:shZ_S0fo}
\sh(\mathcal{Z}) \subset \StabilizerId{\func,\omega}.
\end{equation}
Let $\alpha\in\mathcal{Z}$.
As $\hFlow_{\alpha}$ leaves invariant each orbit of $\hFld$, and therefore it preserves $\func$, we have that $\hFlow_{\alpha}\in\Stabilizer{\func}$.

Moreover, by formula~\eqref{equ:flow_alpha_acts_on_omega}, $\hFlow_{\alpha}^{*}\omega=\omega$, so $\hFlow_{\alpha}\in\Stabilizer{\func,\omega}$.

Now notice that $t\alpha \in\mathcal{Z}$ for all $t\in\RRR$, and so $\hFlow_{t\alpha}\in\Stabilizer{\func,\omega}$ as well.
Thus the homotopy $\hFlow_{t\alpha}:\Mman\to\Mman$, $t\in[0,1]$, is in fact an isotopy in $\Stabilizer{\func,\omega}$ between $\id_{\Mman} = \hFlow_0$ and $\hFlow_{\alpha}$.
Hence $\hFlow_{\alpha}\in\StabilizerId{\func,\omega}$.

\smallskip

Further we claim that
\begin{equation}\label{equ:Z_sh1_S0f_Sfo}
\mathcal{Z} \ \supset \ \sh^{-1}\bigl( \StabilizerId{\func} \cap \Stabilizer{\func,\omega} \bigr).
\end{equation}
Indeed, let $\dif\in\StabilizerId{\func} \cap \Stabilizer{\func,\omega}$.
Then by~\ref{enum:th:charact_Sid:sh_G_in_Stabf} of Theorem~\ref{th:charact_Sid} $\dif = \hFlow_{\alpha}$ for some $\alpha\in\Gamma$.
As $\omega$ is everywhere non-zero on $\Mman$, it follows from formula~\eqref{equ:flow_alpha_acts_on_omega} that $\df{\hFld}{\alpha}=0$ on all of $\Mman$, that is $\alpha\in\mathcal{Z}$.

Hence 
\[
\sh(\mathcal{Z}) 
\ \stackrel{\eqref{equ:shZ_S0fo}}{\subset} \ 
\StabilizerId{\func,\omega} \ \ \subset \ \ \StabilizerId{\func} \cap \Stabilizer{\func,\omega} \
\ \stackrel{\eqref{equ:Z_sh1_S0f_Sfo}}{\subset} \
\sh(\mathcal{Z}),
\]
\[
\mathcal{Z} 
\ \stackrel{\eqref{equ:shZ_S0fo}}{\subset} \ 
\sh^{-1}\bigl( \StabilizerId{\func,\omega} \bigr) \ \subset \ 
\sh^{-1}\bigl( \StabilizerId{\func} \cap \Stabilizer{\func,\omega} \bigr) 
\ \stackrel{\eqref{equ:Z_sh1_S0f_Sfo}}{\subset} \
\mathcal{Z}.
\]
This proves~\ref{enum:th:charact_Sid_f_omega:sh_Z_in_Stabfo}.

\medskip

\ref{enum:th:charact_Sid_f_omega:s1}, \ref{enum:th:charact_Sid_f_omega:hom_eq}
Suppose $\sh:\Gamma\to\StabilizerId{\func}$ is an infinite cyclic covering map, and let $\theta\in\Gamma$ be the function from~\ref{enum:th:charact_Sid:s1} of Theorem~\ref{th:charact_Sid}.

Then due to property~\ref{enum:th:charact_Sid:s1:theta_per} in Theorem~\ref{th:charact_Sid}	 $\theta$ takes constant values along orbits of $\hFld$, and therefore $\theta \in \mathcal{Z}$.
Since, in addition, $\mathcal{Z}$ is a group, it follows that $\mathcal{Z}$ is invariant with respect to the $\ZZZ$-action on $\Gamma$, i.e. $\alpha+n\theta\in\mathcal{Z}$ for all $\alpha\in\mathcal{Z}$.
Therefore $\mathcal{Z} = \sh^{-1}(\StabilizerId{\func,\omega})$.
Hence $\sh|_{\mathcal{Z}}: \mathcal{Z} \to \StabilizerId{\func,\omega}$ is a $\ZZZ$-covering as well as $\sh|_{\Gamma}$.
As $\mathcal{Z}$ is contractible, we obtain that the quotient $\StabilizerId{\func,\omega}$ is homotopy equivalent to the circle.

Consider the following path $\tau:[0,1]\to\mathcal{Z} \subset\Gamma$, $\tau(t) = t \theta$.
Then $\sh\circ\tau$ is a loop in $\StabilizerId{\func,\omega} \subset \StabilizerId{\func}$, since
\[
\sh\circ\tau(1)(x) = \flow(x,\theta(x))= x = \flow(x,0) = \sh\circ\tau(0)(x).
\]
This loop is a generator of $\pi_1\StabilizerId{\func,\omega}\cong\ZZZ$ as well as a generator of $\pi_1\StabilizerId{\func}\cong\ZZZ$.
Hence the inclusion $j:\StabilizerId{\func,\omega} \subset \StabilizerId{\func}$ yields an isomorphism of fundamental groups.
Since these spaces homotopy equivalent to the circle, we obtain that $j$ is a homotopy equivalence.

\medskip

\ref{enum:th:charact_Sid_f_omega:pt}, \ref{enum:th:charact_Sid_f_omega:hom_eq}
If $\sh:\Gamma\to\StabilizerId{\func}$ is a homeomorphism, then due to \ref{enum:th:charact_Sid_f_omega:sh_Z_in_Stabfo} it yields a homeomorphism of $\mathcal{Z}$ onto $\StabilizerId{\func,\omega}$.
In particular, both $\StabilizerId{\func,\omega}$ and $\StabilizerId{\func}$ are contractible, and so the inclusion $\StabilizerId{\func,\omega} \subset \StabilizerId{\func}$ is a homotopy equivalence.

\medskip

\ref{enum:th:charact_Sid_f_omega:j_is_inj}
Injectivity of $j_0$ follows from the relation $\StabilizerId{\func,\omega} = \StabilizerId{\func} \cap \Stabilizer{\func,\omega}$.
Theorem~\ref{th:charact_Sid_f_omega} is completed.
\end{proof}

\begin{remark}\rm
Though the inclusion $\StabilizerId{\func,\omega} \subset \StabilizerId{\func}$ is a homotopy equivalence, it seems to be a highly non-trivial task to find precise formulas for a \myemph{strong} deformation retraction of $\StabilizerId{\func}$ onto $\StabilizerId{\func,\omega}$.
For the case~\ref{enum:th:charact_Sid_f_omega:pt} of Theorem~\ref{th:charact_Sid_f_omega} this is equivalent to a construction of a strong deformation retraction of $\Gamma$ onto $\mathcal{Z}$.
In fact, it suffices to find a retraction $r:\Gamma\to\mathcal{Z}$, so to associate to each $\alpha\in\Gamma$ a function $r(\alpha)$ taking constant values along orbits of $\hFld$ so that each $\beta\in\mathcal{Z}$ remains unchanged.
Then a strong deformation $r_t:\Gamma\to\mathcal{Z}$, $t\in[0,1]$, of $\Gamma$ onto $\mathcal{Z}$ can be given by $r_t(\alpha) = (1-t)\alpha+tr(\alpha)$.
\end{remark}

\subsection{Counterexample for maps $\gfunc\not\in\FF(\Mman,\Pman)$}\label{counterexample:Zg_Sg}
Let $D^2 = \{|z|\leq 1\}$ be the unit disk in the complex plane $\CCC$ and $\omega = dx\wedge dy$ be the standard symplectic form.
Consider the following two functions $\func, \gfunc: D^2\to [0,1]$ defined by 
\begin{align*}
\func(x,y) &= x^2+y^2 = |z|^2, &
\gfunc(x,y) &= (x^2+y^2)^2 = |z|^4.
\end{align*}
Then the foliations by level sets of $\func$ and $\gfunc$ coincide, whence 
\begin{align*}
\ZentrQ{\RRR}{\func} &= \ZentrQ{\RRR}{\gfunc}, & \ \ 
\StabilizerId{\func,\omega} &= \StabilizerId{\gfunc,\omega}, &  \ \
\StabilizerId{\func} &= \StabilizerId{\gfunc}.
\end{align*}
However, $\func\in\FF(D^2,\RRR)$, while $\gfunc$ does not belong to $\FF(D^2,\RRR)$ since it is a polynomial with multiple factors.

Notice that the Hamiltonian vector fields $F$ and $G$ of $\func$ and $\gfunc$ are given by
\begin{align*}
F(x,y) &= -2y\tfrac{\partial}{\partial x} + 2x\tfrac{\partial}{\partial y}, &
G(x,y) &= 2(x^2+y^2) F(x,y).
\end{align*}
In particular, the Hamiltonian flow $\mathbf{F}: D^2\times\RRR \to D^2$ of $\func$ is given by $\mathbf{F}(z,t) = e^{2i t} z$, and so the tangent map $T_0 \mathbf{F}_t: T_0 D^2\to T_0 D^2$ is not the identity for $t\not=\pi n$, $n\in\ZZZ$.

On the other hand, the linear part of $G$ at $0$ vanishes, whence for the Hamiltonian flow $\mathbf{G}: D^2\times\RRR \to D^2$ of $\gfunc$ the corresponding tangent map
$T_0 \mathbf{G}_t: T_0 D^2\to T_0 D^2$ is always the identity.
Hence for any $C^{\infty}$ function $\alpha$ the tangent map at $0$ of $\mathbf{G}_{\alpha}$ is the identity as well.
Therefore for $t\not=\pi n$, $n\in\ZZZ$, then $\mathbf{F}_t \not= \mathbf{G}_{\alpha}$ for any $\alpha\in C^{\infty}(\Mman,\RRR)$.

By Theorem~\ref{th:charact_Sid_f_omega} the shift map $\sh_{F}:\ZentrQ{\RRR}{\func} \to \StabilizerId{\func,\omega}$ of $F$ is an infinite cyclic covering, while the shift map 
\[
\sh_{G}:\ZentrQ{\RRR}{\gfunc} \equiv \ZentrQ{\RRR}{\func} \ \longrightarrow \ \StabilizerId{\func,\omega} \equiv \StabilizerId{\gfunc,\omega}
\]
of $G$ turns out to be not surjective, since its image does not contain $\mathbf{F}_t$ for $t\not=\pi n$, $n\in\ZZZ$.

Thus we see that the centralizer of $\gfunc$ does not ``detect'' all the diffeomorphisms from $\StabilizerId{\gfunc}$, while the centralizer of $\func$ does so.
This shows that the assumption $\func\in\FF(\Mman,\Pman)$ in Theorem~\ref{th:charact_Sid_f_omega} is essential.

\subsection{Kronrod-Reeb graph of $\func$}\label{sect:Kronrod_Reeb_graph}
Now we will give an interpretation of $\mathcal{Z}$ in terms of functions on the Kronrod-Reeb graph of $\func$.

Let $\func\in\FF(\Mman,\Pman)$.
Consider the partition $\Delta$ of $\Mman$ into connected components of level-sets of $\func$.
Let $\KRGraph:=\Mman/\Delta$ be the corresponding quotient space and $p:\Mman\to\KRGraph$ be the factor map.
Then we have a natural decomposition
\[
\func = \widehat{\func} \circ p: \Mman \xrightarrow{~~p~~} \KRGraph \xrightarrow{~~\widehat{f}~~} \Pman.
\]
Endow $\KRGraph$ with the final topology, so a subset $U\subset \KRGraph$ is open if and only if $p^{-1}(U)$ is open in $\Mman$.
Then it is well known that $\KRGraph$ has a natural structure of a one-dimensional CW-complex.
It is called a \myemph{Lyapunov} or \myemph{Kronrod-Reeb} graph of $\func$, \cite{AdelsonWelskyKronrod:DANSSSR:1945}, \cite{Reeb:ASI:1952}, \cite{Kronrod:UMN:1950}, \cite{Franks:Top:1985}, \cite{BolsinovFomenko:1997}.

We will briefly recall the correspondence between elements of $\Delta$ (i.e. points of $\KRGraph$) and orbits of $\hFld$.
Let $\gamma\in\Delta$.
If $\gamma$ contains at least one critical point of $\func$, then it follows from Axiom~\AxCrPt\ that $\gamma$ is a connected 1-dimensional CW-complex such that each of its vertices has even (possibly zero) degree, and $p(\gamma)$ is a vertex of $\KRGraph$.
In this case the vertices of $\gamma$ are critical points of $\func$ being also zeros of $\hFld$, while edges of $\gamma$ are non-closed orbits of $\hFld$.

If $\gamma$ has no critical point of $\func$, then $\gamma$ is a closed orbit of $\hFld$.

\begin{lemma}\label{lm:Z_as_func_on_KR}
Each $\alpha\in\ZentrQ{\Qman}{\func}$ yields a unique continuous function $\widehat{\alpha}:\KRGraph\to\Qman$ such that $\alpha = \widehat{\alpha} \circ p$.
Moreover, the correspondence $\alpha \mapsto \widehat{\alpha}$ is a continuous injective map $\eta:\ZentrQ{\Qman}{\func} \to C(\KRGraph,\Qman)$ with respect to $C^{0}$ topology on $C(\KRGraph,\Qman)$.
\end{lemma}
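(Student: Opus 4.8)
The plan is to prove first that every $\alpha\in\ZentrQ{\Qman}{\func}$ is constant on each element $\gamma$ of the partition $\Delta$, so that it descends through the factor map $p\colon\Mman\to\KRGraph$, and then to read off the topological properties of $\eta$ more or less formally. For the descent I would use the description of the elements $\gamma\in\Delta$ recalled just above the statement. If $\gamma$ carries no critical point of $\func$, it is a single closed orbit of $\hFld$, and constancy of $\alpha$ on $\gamma$ is exactly the defining property of $\ZentrQ{\Qman}{\func}$. If $\gamma$ contains a critical point, it is a connected $1$-dimensional CW-complex whose vertices are the critical points of $\func$ (the zeros of $\hFld$) and whose edges are non-closed orbits of $\hFld$; when $\gamma$ reduces to a single vertex (a local extremum) constancy is trivial.

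For such a critical $\gamma$ I would note that $\alpha$ is constant, say equal to $c_{\ell}$, on each edge $\ell$. The key observation is that each non-closed orbit $\ell$ has the bounding critical points of $\gamma$ in its closure, so choosing $t_n$ with $\hFlow(x_0,t_n)$ converging to such a vertex $v$ gives $\alpha(v)=\lim_n \alpha(\hFlow(x_0,t_n))=c_{\ell}$ by continuity of $\alpha$. Hence adjacent edges, sharing a vertex, carry the same constant, and since $\gamma$ is connected this common value propagates over all of $\gamma$. This yields a unique $\widehat\alpha\colon\KRGraph\to\Qman$ with $\alpha=\widehat\alpha\circ p$, uniqueness coming from surjectivity of $p$. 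Its continuity is then automatic from the definition of the final topology on $\KRGraph$: $\widehat\alpha$ is continuous if and only if $\widehat\alpha\circ p=\alpha$ is, and $\alpha$ is smooth. Injectivity of $\eta$ is equally formal, since $\widehat\alpha=\widehat\beta$ forces $\alpha=\widehat\alpha\circ p=\widehat\beta\circ p=\beta$.

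It then remains to verify continuity of $\eta$ from the $C^{\infty}$ topology on $\ZentrQ{\Qman}{\func}$ to the $C^{0}$ topology on $C(\KRGraph,\Qman)$. Here I would use that $\Mman$ is compact, so $\KRGraph=p(\Mman)$ is compact and the $C^{0}$ topology is that of uniform convergence. Because $p$ is surjective, for any $\alpha,\beta$ one has
\[
\sup_{v\in\KRGraph} d_{\Qman}\bigl(\widehat\alpha(v),\widehat\beta(v)\bigr)
= \sup_{x\in\Mman} d_{\Qman}\bigl(\alpha(x),\beta(x)\bigr),
\]
so $\eta$ is an isometry for the two $C^{0}$ metrics; as the $C^{\infty}$ topology refines the $C^{0}$ one, $\eta$ is continuous.

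The one genuinely nontrivial point, and the step I would expect to demand the most care, is the constancy of $\alpha$ on the critical components $\gamma$ --- more precisely, the matching of the constant value along an edge with the value at the bounding vertices. This rests on the fact, guaranteed by Axiom~\AxCrPt, that each non-closed orbit in $\gamma$ limits onto the critical points of $\gamma$; everything else is bookkeeping with the final topology and the surjectivity of $p$.
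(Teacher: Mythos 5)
Your proof is correct and follows essentially the same route as the paper's: constancy of $\alpha$ on each critical component $\gamma\in\Delta$ via edge-wise constancy plus continuity at the vertices (you make explicit the limiting argument the paper compresses into ``continuity and connectedness''), then the final topology for continuity of $\widehat{\alpha}$ and formal arguments for uniqueness and injectivity. You in fact go a bit further than the paper, which leaves the $C^{0}$-continuity of $\eta$ ``for the reader'': your observation that surjectivity of $p$ makes $\eta$ a sup-metric isometry, combined with the fact that the $C^{\infty}$ topology refines the $C^{0}$ topology on the compact $\Mman$, settles that point cleanly.
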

\begin{proof}
Let $\alpha\in\ZentrQ{\Qman}{\func}$, so $\alpha$ takes constant values along orbits of $\hFld$.
First we should show that $\alpha$ takes constant value at each element of $\Delta$.

Consider any element $\gamma \in \Delta$.
If $\gamma$ contains no critical point of $\func$, then $\gamma$ is a closed orbit of $\hFld$, and so $\alpha$ takes a constant value at $\gamma$, see Figure~\ref{fig:KRGraph}.
\begin{figure}[ht]
\includegraphics[height=3cm]{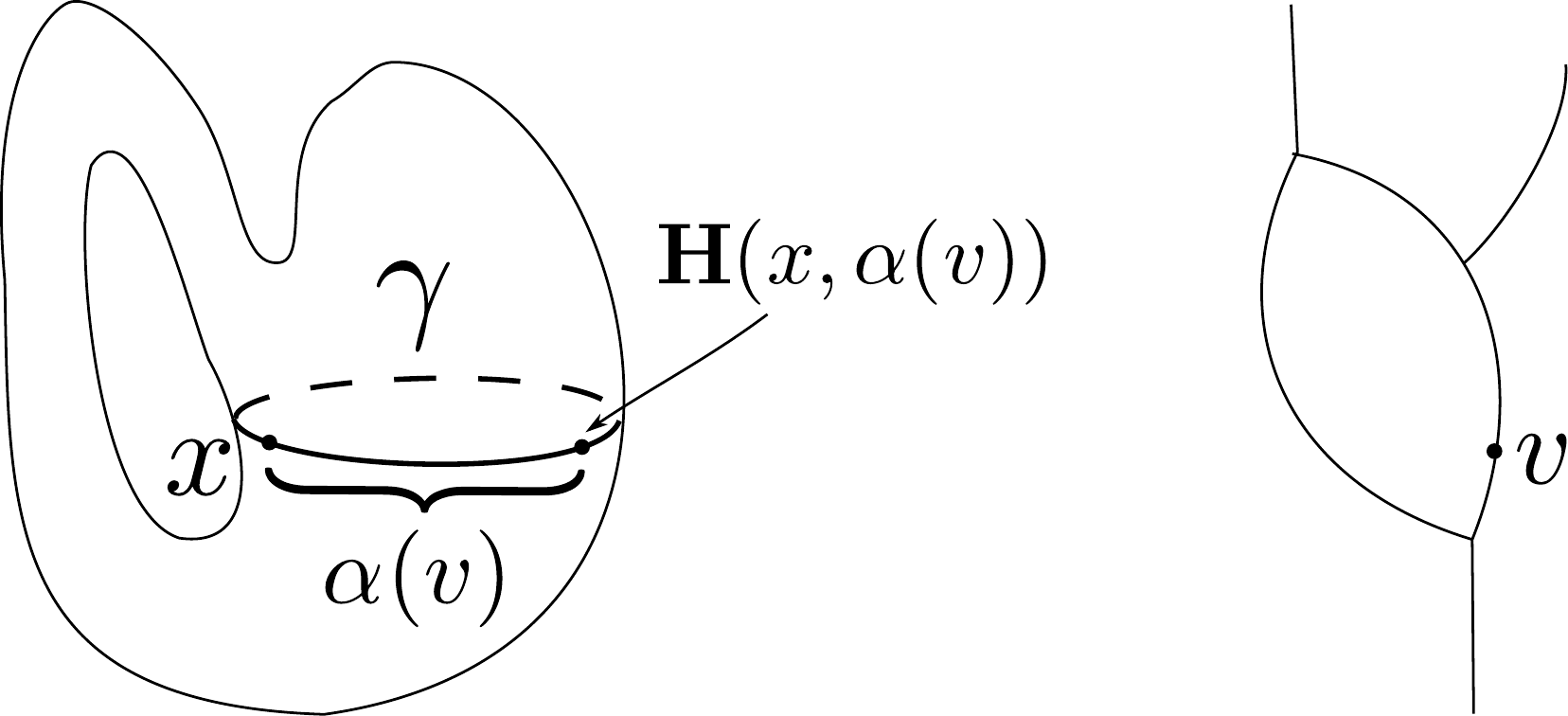}
\caption{}
\label{fig:KRGraph}
\end{figure}

Otherwise, $\gamma$ is a connected $1$-dimensional CW-complex whose vertices and edges are orbits of $\hFld$.
Then $\alpha$ takes constant values along edges of $\gamma$, and it follows from continuity of $\alpha$ and connectedness of $\gamma$ that $\alpha$ is constant on all of $\gamma$.

Thus $\alpha$ yields a unique function $\widehat{\alpha}:\KRGraph\to\Qman$ such that $\alpha = \widehat{\alpha} \circ p$.

Since $\KRGraph$ has final topology with respect to $p$ and $\alpha$ is continuous, it easily follows and is well known that $\widehat{\alpha}$ is continuous.
Continuity of the correspondence $\alpha\mapsto\widehat{\alpha}$ is left for the reader.
\end{proof}

Lemma~\ref{lm:Z_as_func_on_KR} together with Theorem~\ref{th:charact_Sid_f_omega} implies that for $\func\in\FF(\Mman,\Pman)$ the elements of $\StabilizerId{\func,\omega}$ are parametrized by continuous functions on the Kronrod-Reeb graph $\KRGraph$ of $\func$, see Figure~\ref{fig:KRGraph}.

More precisely, due to Theorem~\ref{th:charact_Sid_f_omega} for each $\dif \in \StabilizerId{\func,\omega}$ there exists $\alpha\in\ZentrQ{\Qman}{\func}$ such that $\dif = \hFlow_{\alpha}$.
This function takes constant values on connected components of level-sets of $\func$, and therefore induces a continuous function $\widehat{\alpha}:\KRGraph\to\RRR$.
Then the value of $\widehat{\alpha}$ at some point $v\in\KRGraph$ equals to the common time shift induced by $\dif$ on all the orbits of $\hFld$ constituting $p^{-1}(v)$.

\subsection{Non-surjectivity of the map $j_0:\pi_0\Stabilizer{\func,\omega}\to \pi_0\StabilizerPlus{\func}$}\label{sect:j0_non_surj}
Let $\gfunc:\RRR^2 \to \RRR$ be the function defined by 
\[ 
\gfunc(x,y) = \bigl((x+1)^2 + y^2\bigr) \bigl((x-1)^2 + y^2\bigr).
\]
It has three critical points: one saddle $p_0 = (0,0)$ and two local minimums $p_1=(-1,0)$ and $p_2=(1,0)$.
Let $D = \gfunc^{-1}[0,2]$, and $\func=\gfunc|_{D}:D\to\RRR$ be the restriction of $\gfunc$ to $D$.
Then $D$ is a $2$-disk and $\func$ belongs to the class $\FF(D,\RRR)$.

Consider the following subset $A = \func^{-1}[0,0.5] \subset D$, see Figure~\ref{fig:j0_not_surj}.
It consists of two connected components $A_1$ and $A_2$ containing the points $p_1$ and $p_2$ respectively.
\begin{figure}[ht]
\includegraphics[height=3cm]{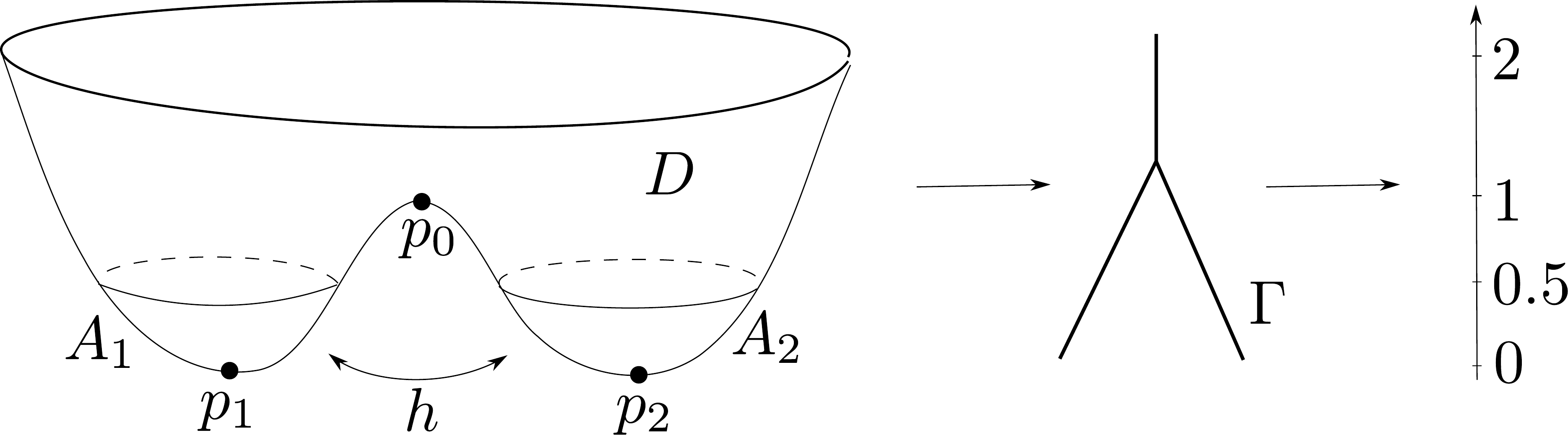}
\caption{}
\label{fig:j0_not_surj}
\end{figure}
Notice that $\dif(A)=A$ for each $\dif\in\Stabilizer{\func}$, whence $\dif$ either preserves both $A_1$ and $A_2$ or interchanges them.
Also notice that if $\dif,k\in\Stabilizer{\func}$ and $\dif(A_1) = A_2$, while $k(A_1)=A_1$, then $\dif$ and $k$ belong to distinct path components of $\Stabilizer{\func}$.

Let $\dif:D\to D$ be a diffeomorphism defined by $D(x,y) = (-x,-y)$.
Evidently, $\dif$ belongs to $\StabilizerPlus{\func}$ and interchanges $A_1$ and $A_2$.

\begin{lemma}
Let $\omega$ be any volume form on $D$ such that $\Vol_{\omega}(A_1) \not= \Vol_{\omega}(A_2)$.
Then the isotopy class $[\dif] \in \pi_0\StabilizerPlus{\func}$ of $\dif$ does not contain any $k\in\Stabilizer{\func,\omega}$.
Hence for such an $\omega$ the map $j_0:\pi_0\Stabilizer{\func,\omega}\to \pi_0\StabilizerPlus{\func}$ is not surjective.
\end{lemma}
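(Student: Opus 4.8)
The plan is to argue by contradiction and to exploit that the specific diffeomorphism $\dif$ \emph{interchanges} $A_1$ and $A_2$, a property which---unlike being $\omega$-preserving---is stable along any path inside $\StabilizerPlus{\func}$.

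First I would make precise the observation recorded just before the lemma. Every $\phi\in\Stabilizer{\func}$ satisfies $\phi(A)=A$ and permutes the two connected components $A_1,A_2$, so we obtain a map $\epsilon:\Stabilizer{\func}\to\ZZZ/2$ with $\epsilon(\phi)=0$ if $\phi(A_1)=A_1$ and $\epsilon(\phi)=1$ if $\phi(A_1)=A_2$. This map is locally constant: for a path $\{\phi_t\}_{t\in[0,1]}$ in $\Stabilizer{\func}$ the assignment $t\mapsto\phi_t(A_1)$ is a continuous family of connected sets, each equal to $A_1$ or to $A_2$, and since $A_1\cap A_2=\varnothing$ it cannot jump between them. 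Hence the restriction of $\epsilon$ to $\StabilizerPlus{\func}$ is constant on path components and descends to $\pi_0\StabilizerPlus{\func}$, where it takes the value $1$ on $[\dif]$ because $\dif(A_1)=A_2$.

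Next I would suppose, toward a contradiction, that some $k\in\Stabilizer{\func,\omega}$ represents the class $[\dif]$. Since an $\omega$-preserving diffeomorphism is orientation preserving, $\Stabilizer{\func,\omega}\subset\StabilizerPlus{\func}$, so the class $[k]\in\pi_0\StabilizerPlus{\func}$ is defined, and $[k]=[\dif]$ forces $\epsilon(k)=1$, i.e.\ $k(A_1)=A_2$. But $k$ preserves $\omega$ and therefore $\omega$-volume, so
\[
\Vol_{\omega}(A_2)=\Vol_{\omega}(k(A_1))=\Vol_{\omega}(A_1),
\]
contradicting the hypothesis $\Vol_{\omega}(A_1)\neq\Vol_{\omega}(A_2)$. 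Thus $[\dif]$ contains no element of $\Stabilizer{\func,\omega}$, which is exactly the assertion that $[\dif]$ lies outside the image of $j_0$; in particular $j_0$ is not surjective.

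There is no analytic difficulty here: the only point that must be argued with care is the local constancy of $\epsilon$ in the first step, and this is precisely the continuity remark already stated before the lemma. It is also worth noting that volume forms with $\Vol_{\omega}(A_1)\neq\Vol_{\omega}(A_2)$ plainly exist---for instance, rescale $dx\wedge dy$ by a positive function whose size over $A_1$ differs from its size over $A_2$---so the hypothesis is met by many $\omega$ and the conclusion is not vacuous.
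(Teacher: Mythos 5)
Your proof is correct and follows essentially the same route as the paper: both arguments combine the facts that a volume-preserving element of $\Stabilizer{\func,\omega}$ cannot interchange $A_1$ and $A_2$ (since $\Vol_{\omega}(A_1)\neq\Vol_{\omega}(A_2)$) and that the preserve-or-swap behaviour on $\{A_1,A_2\}$ is constant on path components of $\Stabilizer{\func}$, which is exactly the remark the paper states just before the lemma. Your explicit verification of the local constancy of $\epsilon$ is a welcome elaboration of that remark, but it is the same underlying idea, merely phrased as a contradiction rather than directly.
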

\begin{proof}
Each $k\in\Stabilizer{\func,\omega}$ preserves $\omega$-volume.
Since $\Vol_{\omega}(A_1) \not= \Vol_{\omega}(A_2)$, it follows that $k(A_i)=A_i$ for $i=1,2$.
But $\dif(A_1)=A_2$, whence $\dif$ and $k$ are not isotopic in $\Stabilizer{\func}$.
\end{proof}

\def\cprime{$'$}


\begin{thebibliography}{10}

\bibitem{AdelsonWelskyKronrod:DANSSSR:1945}
G.~M. Adelson-Welsky and A.~S. Kronrode.
\newblock Sur les lignes de niveau des fonctions continues poss\'edant des
  d\'eriv\'ees partielles.
\newblock {\em C. R. (Doklady) Acad. Sci. URSS (N.S.)}, 49:235--237, 1945.

\bibitem{Banyaga:CMH:1978}
Augustin Banyaga.
\newblock Sur la structure du groupe des diff\'eomorphismes qui pr\'eservent
  une forme symplectique.
\newblock {\em Comment. Math. Helv.}, 53(2):174--227, 1978.

\bibitem{BolsinovFomenko:1997}
A.~V. Bolsinov and A.~T. Fomenko.
\newblock {\em Vvedenie v topologiyu integriruemykh gamiltonovykh sistem
  ({I}ntroduction to the topology of integrable Hamiltonian systems)}.
\newblock ``Nauka'', Moscow, 1997.

\bibitem{Dehn:AM:1938}
M.~Dehn.
\newblock {Die Gruppe der Abbildungsklassen}.
\newblock {\em Acta Mathematica}, 69:135--206, 1938.

\bibitem{Franks:Top:1985}
John Franks.
\newblock Nonsingular {S}male flows on {$S^3$}.
\newblock {\em Topology}, 24(3):265--282, 1985.

\bibitem{Kronrod:UMN:1950}
A.~S. Kronrod.
\newblock On functions of two variables.
\newblock {\em Uspehi Matem. Nauk (N.S.)}, 5(1(35)):24--134, 1950.

\bibitem{Kudryavtseva:MathNotes:2012}
E.~A. Kudryavtseva.
\newblock The topology of spaces of {M}orse functions on surfaces.
\newblock {\em Math. Notes}, 92(1-2):219--236, 2012.
\newblock Translation of Mat. Zametki {{\bf{9}}2} (2012), no. 2, 241--261.

\bibitem{Kudryavtseva:MatSb:2013}
E.~A. Kudryavtseva.
\newblock On the homotopy type of spaces of {M}orse functions on surfaces.
\newblock {\em Mat. Sb.}, 204(1):79--118, 2013.

\bibitem{Lickorish:PCPS:1964}
W.~B.~R. Lickorish.
\newblock A finite set of generators for the homeotopy group of a
  {$2$}-manifold.
\newblock {\em Proc. Cambridge Philos. Soc.}, 60:769--778, 1964.

\bibitem{Maksymenko:TA:2003}
Sergiy Maksymenko.
\newblock Smooth shifts along trajectories of flows.
\newblock {\em Topology Appl.}, 130(2):183--204, 2003.

\bibitem{Maksymenko:AGAG:2006}
Sergiy Maksymenko.
\newblock Homotopy types of stabilizers and orbits of {M}orse functions on
  surfaces.
\newblock {\em Ann. Global Anal. Geom.}, 29(3):241--285, 2006.

\bibitem{Maksymenko:TrMath:2008}
Sergiy Maksymenko.
\newblock Homotopy dimension of orbits of {M}orse functions on surfaces.
\newblock {\em Travaux Math\'ematiques}, 18:39--44, 2008.

\bibitem{Maksymenko:CEJM:2009}
Sergiy Maksymenko.
\newblock $\infty$-jets of diffeomorphisms preserving orbits of vector fields.
\newblock {\em Cent. Eur. J. Math.}, 7(2):272--298, 2009.

\bibitem{Maksymenko:reparam-sh-map}
Sergiy Maksymenko.
\newblock Reparametrization of vector fields and their shift maps.
\newblock {\em Pr. Inst. Mat. Nats. Akad. Nauk Ukr. Mat. Zastos.},
  6(2):489--498, arXiv:math/0907.0354, 2009.

\bibitem{Maksymenko:ProcIM:ENG:2010}
Sergiy Maksymenko.
\newblock Functions with isolated singularities on surfaces.
\newblock {\em Geometry and topology of functions on manifolds. Pr. Inst. Mat.
  Nats. Akad. Nauk Ukr. Mat. Zastos.}, 7(4):7--66, 2010.

\bibitem{Maksymenko:OsakaJM:2011}
Sergiy Maksymenko.
\newblock Local inverses of shift maps along orbits of flows.
\newblock {\em Osaka Journal of Mathematics}, 48(2):415--455, 2011.

\bibitem{Maksymenko:DefFuncI:2014}
Sergiy Maksymenko.
\newblock Deformations of functions on surfaces by isotopic to the identity
  diffeomorphisms.
\newblock page arXiv:math/1311.3347v3, 2016.

\bibitem{McDuff:ProcAMS:1985}
Dusa McDuff.
\newblock Remarks on the homotopy type of groups of symplectic diffeomorphisms.
\newblock {\em Proc. Amer. Math. Soc.}, 94(2):348--352, 1985.

\bibitem{McDuffSalamon:SympTop:1995}
Dusa McDuff and Dietmar Salamon.
\newblock {\em Introduction to symplectic topology}.
\newblock Oxford Mathematical Monographs. The Clarendon Press, Oxford
  University Press, New York, 1995.
\newblock Oxford Science Publications.

\bibitem{Moser:TrAMS:1965}
J{\"u}rgen Moser.
\newblock On the volume elements on a manifold.
\newblock {\em Trans. Amer. Math. Soc.}, 120:286--294, 1965.

\bibitem{Polterovich:SympDiff:2001}
Leonid Polterovich.
\newblock {\em The geometry of the group of symplectic diffeomorphisms}.
\newblock Lectures in Mathematics ETH Z\"urich. Birkh\"auser Verlag, Basel,
  2001.

\bibitem{Reeb:ASI:1952}
Georges Reeb.
\newblock {\em Sur certaines propri\'et\'es topologiques des vari\'et\'es
  feuillet\'ees}.
\newblock Actualit\'es Sci. Ind., no. 1183. Hermann \& Cie., Paris, 1952.
\newblock Publ. Inst. Math. Univ. Strasbourg 11, pp. 5--89, 155--156.

\end{thebibliography}
\end{document}